\providecommand{\noopsort}[1]{} 
\crefname{Theorem}{Theorem}{Theorems}
\crefname{Th}{Theorem}{Theorems}
\crefname{Prop}{Proposition}{Propositions}
\crefname{Lemma}{Lemma}{Lemmas}
\crefname{Cor}{Corollary}{Corollaries}
\crefname{Thx}{Theorem}{Theorems}
\crefname{Remark}{Remark}{Remarks}
\crefname{Def}{Definition}{Definitions}
\crefname{Example}{Example}{Examples}
\crefname{Question}{Problem}{Problems}
\crefname{section}{Section}{Sections}
\newtheorem{Th}{Theorem}[section]
\newtheorem{Prop}[Th]{Proposition}
\newtheorem{Lemma}[Th]{Lemma}
\newtheorem{Cor}[Th]{Corollary}
\newtheorem{Thx}{Theorem}
\theoremstyle{definition}
\newtheorem{Remark}[Th]{Remark}
\newtheorem{Def}{Definition}[section]
\newcommand{\gp}{\mathfrak{p}}
\newcommand{\q}{\mathbb{Q}}
\newcommand{\beq}{\begin{equation}}
\newcommand{\eeq}{\end{equation}}
\def\scalar(#1,#2){(#1\mid#2)}
\newcommand{\vol}{\operatorname{vol}}
\newcommand{\co}{\mathcal{O}}
\newcommand{\gb}{\mathfrak{b}}
\newcommand{\xbm}{(X,{\cal B},\mu)}
\newcommand{\Q}{\mathbb{Q}}
\newcommand{\R}{{\mathbb{R}}}
\newcommand{\Z}{{\mathbb{Z}}}
\newcommand{\N}{{\mathbb{N}}}
\newcommand{\PP}{{\mathbb{P}}}
\newcommand{\vep}{\varepsilon}
\newcommand{\mob}{\boldsymbol{\mu}}
\newcommand{\raz}{\mathbbm{1}}
\newcommand{\OK}{\mathcal{O}_K}
\title{$\mathfrak{B}$-free integers in number fields and dynamics}
\author{Francisco Ara\'{u}jo \hspace{1cm}
Aurelia Dymek
\and Joanna Ku\l{}aga-Przymus}
\begin{document}
\bibliographystyle{siam}
\maketitle

\abstract{In 2010, Sarnak initiated the study of the dynamics of the system determined by the square of the M\"obius function (the characteristic function of the square-free integers). We deal with his program in the more general context of $\mathfrak{B}$-free integers in number fields, suggested 5 years later by Baake and Huck. This setting encompasses the classical square-free case and its generalizations. Given a number field $K$, let $\mathfrak{B}$ be a family of pairwise coprime ideals in its ring of integers $\mathcal{O}_K$, such that $\sum_{\mathfrak{b}\in\mathfrak{B}}1/|\OK / \mathfrak{b}|<\infty$. We study the dynamical system determined by  the set $\mathcal{F}_\mathfrak{B}=\OK\setminus \bigcup_{\mathfrak{b}\in\mathfrak{B}}\mathfrak{b}$ of $\mathfrak{B}$-free integers in $\OK$. We show that the characteristic function $\raz_{\mathcal{F}_\mathfrak{B}}$ of $\mathcal{F}_\mathfrak{B}$ is generic along the natural F\o{}lner sequence for a probability measure on $\{0,1\}^{\OK}$, invariant under the multidimensional shift. The corresponding measure-theoretical dynamical system is proved to be isomorphic to an ergodic rotation on a compact Abelian group. In particular, it is of zero Kolmogorov entropy. Moreover, we provide a description of ``patterns'' appearing in $\mathcal{F}_\mathfrak{B}$ and compute the topological entropy of the orbit closure of $\raz_{\mathcal{F}_\mathfrak{B}}$. Finally, we show that this topological dynamical system 
has a non-trivial topological joining with an ergodic rotation on a compact Abelian group. 
}

\tableofcontents
\section{Introduction}
\subsection{Motivation}\label{moti}
The M\"obius function $\mob$ is one of central objects in number theory. Recall that it is given by $\mob(1)=1$, $\mob(n)=(-1)^m$ when $n$ is a product of $m$ distinct primes, and takes value zero for $n$ which are not square-free (i.e.\ are divisible by the square of a prime). The function $\mob$ displays quite a random behavior reflected in the bound $\sum_{n\leq N}\mob(n)=\rm{o}(N)$, equivalent to the Prime Number Theorem (cf.\ \cite{MR0434929}, p.\ 91). Moreover, $\sum_{n\leq N}\mob(n)={\rm O}_\vep(N^{1/2+\vep})$ (for each $\vep>0$) is already equivalent to the Riemann hypothesis~\cite{MR882550}. More recently, $\mob$ has become of an interest also from the point of view of ergodic theory. Sarnak, in his seminal paper~\cite{sarnak-lectures} conjectured that
\begin{equation}\label{eq:sarnak}
\sum_{n\leq N}\mob(n) f(T^nx) ={\rm o}(N)
\end{equation}
for every zero topological entropy homeomorphism $T$ of a compact metric space $X$, every $f\in C(X)$ and every $x\in X$. The convergence resulting from~\eqref{eq:sarnak} follows from the Chowla conjecture from the 1960's~\cite{MR0177943} on higher order self-correlations of $\mob$~\cite{MR3014544,MR3622068}. Sarnak also proposed to study the dynamical systems related to~$\mob$ and to $\mob^2$. The latter is the subshift determined by the characteristic function of square-free integers. In each case, we extend the corresponding sequence (either $\mob$ or $\mob^2$) symmetrically and we study its orbit closure in the space $A^\Z$ (with $A=\{-1,0,1\}$ or $\{0,1\}$) of sequences under the left shift~$S$ (for $z\in A^\Z$, the corresponding orbit closure will be denoted by $X_z$). While the dynamics of $(X_{\mob}, S)$ is still quite mysterious, we can study it through the much simpler system $(X_{\mob^2},S)$ which is a topological factor of $(X_{\mob},S)$ via the map ${(x_n)}_{n\in\Z}\mapsto {(x_n^2)}_{n\in\Z}$. Sarnak~\cite{sarnak-lectures} announced several results concering $(X_{\mob^2},S)$:
\begin{enumerate}[(A)]\label{strony}
\item sequence $\mob^2$ is a generic point for a natural $S$-invariant zero Kolmogorov entropy probability measure $\nu_{\mob^2}$ on $\{0,1\}^\Z$,\label{s:A}
\item the topological entropy of $(X_{\mob^2},S)$ is equal to $6/\pi^2$,\label{s:B}
\item subshift $X_{\mob^2}$ consists  of so-called \emph{admissible sequences}, i.e.\ $x\in X_{\mob^2}$ is equivalent to $|\text{supp }x \bmod p^2|<p^2$ for each $p\in\mathcal{P}$,\footnote{We denote by $\text{supp }x$ the support of $x$, i.e.\ $\text{supp }x=\{n\in\Z : x(n)\neq 0\}$ and $\mathcal{P}$ stands for the set of primes.}\label{s:C}
\item dynamical system $(X_{\mob^2},S)$ is proximal and $\{(\dots,0,0,0,\dots)\}$ is the unique minimal subset of $X_{\mob^2}$,\label{s:D}
\item the maximal equicontinuous factor of $(X_{\mob^2},S)$ is trivial but $(X_{\mob^2},S)$ has a non-trivial joining with a rotation on the compact Abelian group $G=\prod_{p\in\mathcal{P}}\Z/p^2\Z$.\label{s:E}
\end{enumerate}
The above statements have been discussed by several authors, both in the setting proposed by Sarnak, and for some natural generalizations \cite{MR3055764,MR3430278,Huck:2014aa,MR3428961,MR3070541,MR3296562,MR3485709}. The purpose of this paper is to extend these works, providing a unified approach in all previous cases which we list here.

\paragraph{Square-free integers}
Sarnak's program was first addressed by Cellarosi and Sinai, who cover~\eqref{s:A} in~\cite{MR3055764}: they showed that $\mob^2$ is generic for a shift-invariant measure $\nu_{\mob^2}$ on $\{0,1\}^\Z$, and that $(X_{\mob^2},S,\nu_{\mob^2})$ is isomorphic to a rotation on the compact Abelian group $\prod_{p\in\mathcal{P}}\Z/p^2\Z$ (in particular, $(X_{\mob^2},S,\nu_{\mob^2})$ is of zero Kolmogorov entropy). Statements~\eqref{s:B} and~\eqref{s:C} were proved in~\cite{MR3430278} by Peckner who also showed that $(X_{\mob^2},S)$ has only one measure of maximal entropy, i.e.\ it is intrinsically ergodic. The proofs of~\eqref{s:D} and~\eqref{s:E} were provided later by Huck and Baake~\cite{Huck:2014aa}, in the more general setting of $k$-free lattice points, see below.

\paragraph{$\mathscr{B}$-free integers}
The set of square-free numbers is a special case of a set of integers with no factors in a given set $\mathscr{B}\subseteq\N\setminus\{1\}$, which is called the $\mathscr{B}$-free set and denoted by $\mathcal{F}_\mathscr{B}$:
\begin{equation}\label{multiples}
\mathcal{F}_\mathscr{B}=\Z \setminus \mathcal{M}_\mathscr{B},\ \text{where }\mathcal{M}_\mathscr{B}=\bigcup_{b\in\mathscr{B}}b\Z.
\end{equation}
Such sets $\mathcal{M}_\mathscr{B}$ were studied already in 1930's by numerous mathematicians, including Behrend, Chowla, Davenport, Erd\H{o}s and Schur, see~\cite{MR1414678}. Clearly, $\mathscr{B}=\{p^2\in\N : p \text{ is prime} \}$ yields $\raz_{\mathcal{F}_\mathscr{B}}=\mob^2$.

In the general setting~\eqref{multiples}, it is unclear how to define a reasonable analogue of $\mob$. However, we can put $\eta:=\raz_{\mathcal{F}_\mathscr{B}}$ and study the associated dynamical system $(X_\eta,S)$. The asymptotic density of $\mathcal{M}_\mathscr{B}$ (and $\mathcal{F}_\mathscr{B}$) exists only under additional assumptions on $\mathscr{B}$~\cite{MR1512943,MR0026088,arxiv_density}. In particular, this happens whenever
\begin{equation}\label{eq:condB}\tag{I}
\mathscr{B} \text{ is infinite, pairwise coprime and }\sum_{b\in\mathscr{B}}{1}/{b}<\infty,
\end{equation}
(which clearly includes the classical square-free case). In setting~\eqref{eq:condB}, statements \eqref{s:A}-\eqref{s:E} were studied by Abdalaoui, Lema\'nczyk and de la Rue~\cite{MR3428961}. In particular, they proved \eqref{s:A}-\eqref{s:C} (with $\mob^2$ replaced with~$\eta$). The intrinsic ergodicity in this context was proved in~\cite{MR3356811}, which also contains a full description of the set of invariant probability measures for $(X_\eta,S)$. 

\begin{Remark}
For general sets $\mathscr{B}\subseteq \N\setminus \{1\}$, not necessarily satisfying \eqref{eq:condB},  sequence $\eta$ is generic if and only if $\mathscr{B}$ is Besicovitch (i.e.\ the asymptotic density of $\mathcal{F}_\mathscr{B}$ exists) \cite[Proposition~E]{MR3803141}. The formula for the topological entropy of $\mathscr{B}$-free subshifts was determined in \cite[Proposition~K]{MR3803141} in the hereditary case (see also \cite[Theorem~F]{MR4802750} for a more general setting). Moreover, a $\mathscr{B}$-free subshift of positive topological entropy consists of all $\mathscr{B}$-admissible sequences if and only if $\mathscr{B}$ satisfies \eqref{eq:condB} \cite[Corollary 1.1]{MR4665256}. The proximality of a $\mathscr{B}$-free subshift is characterized in \cite[Theorem~B]{MR3803141}. The maximal equicontinuous factor of a $\mathscr{B}$-free subshift is described in \cite[Theorem D]{MR3947636}. The existence of a non-trivial topological joining as in \eqref{s:E} is shown in \cite[Proposition 3.21]{MR3803141}.
\end{Remark}

\paragraph{$k$-free lattice points}
Another way of generalizing the square-free setting considered by Sarnak was discussed by Pleasants and Huck~\cite{MR3070541}. Given a lattice $\Lambda$ in $\R^d$ (i.e.\ a discrete cocompact subgroup), they defined the set of $k$-free lattice points by 
\begin{equation}\tag{II}\label{dwojka}
\mathcal{F}_k=\mathcal{F}_k(\Lambda):=\Lambda \setminus \bigcup_{p\in\mathcal{P}} p^k \Lambda.
\end{equation}
The associated dynamical system is the orbit closure $X_k$ of $\raz_{\mathcal{F}_k}\in\{0,1\}^\Lambda$ under the corresponding multidimensional shift ${(S_\lambda)}_{\lambda\in\Lambda}$ (see Theorem~\ref{szift}). It was shown in~\cite{MR3070541} that $\raz_{\mathcal{F}_k}$ is generic along the F\o{}lner sequence $([-n,n]^d\cap \Lambda)$ under ${(S_\lambda)}_{\lambda\in\Lambda}$ for a probability measure $\nu$ on $X_k$, and that $(X_k,{(S_\lambda)}_{\lambda\in\Lambda}, \nu)$ is measure-theoretically isomorphic to a rotation on the compact Abelian group $\prod_{p\in\mathcal{P}}\Lambda/p^k\Lambda$ (cf.\ \eqref{s:A} above). A formula for the topological entropy of $(X_k,{(S_\lambda)}_{\lambda\in\Lambda})$ was also given (cf.\ \eqref{s:B} above). Finally, $X_k$ was described in terms of admissible patches (cf.\ \eqref{s:C} above).

\paragraph{$k$-free integers in number fields}
Cellarosi and Vinogradov \cite{MR3296562} discussed the setting of algebraic number fields and defined all the aforementioned objects in this context. For a finite extension $K$ of $\Q$, they studied the following subset of the ring of integers $\mathcal{O}_K\subseteq K$:
\begin{equation}\label{trojka}\tag{III}
\mathcal{F}_k=\mathcal{F}_k(\mathcal{O}_K):=\mathcal{O}_K \setminus \bigcup_{\mathfrak{p}\in\mathfrak{P}}\mathfrak{p}^k
\end{equation}
($\mathfrak{P}$ stands for the family of all prime ideals $\mathfrak{p}$ in $\mathcal{O}_K$ and $\mathfrak{p}^k$ stands for $\underbrace{\mathfrak{p}\dots\mathfrak{p}}_k$). They considered the orbit closure $X_k$ of $\raz_{\mathcal{F}_k}\in \{0,1\}^{\mathcal{O}_K}$ under the multidimensional shift ${(S_a)}_{a\in\mathcal{O}_K}$. Using similar methods as developed earlier by Cellarosi and Sinai in~\cite{MR3055764}, they proved an analogue of~\eqref{s:A}. In fact, they showed that $\raz_{\mathcal{F}_k}$ is generic for an ergodic probability measure $\nu$ on $\{0,1\}^{\mathcal{O}_K}$ along the image of the natural F\o{}lner sequence $([-n,n]^d)$ via the Minkowski embedding, and that $(X_k,{(S_a)}_{a\in \mathcal{O}_K}, \nu)$ is measure-theoretically isomorphic to a rotation on the compact Abelian group $\prod_{\mathfrak{p}\in\mathfrak{P}}\mathcal{O}_K/\mathfrak{p}^k$.

\paragraph{$\mathscr{B}$-free lattice points} 
Baake and Huck in their survey~\cite{MR3485709} extended~\eqref{dwojka} and defined $\mathscr{B}$-free lattice points in a lattice $\Lambda\subseteq \R^d$:
\begin{equation}\tag{IV}\label{czworka}
\mathcal{F}_{\mathscr{B}}=\mathcal{F}_{\mathscr{B}}(\Lambda):=\Lambda \setminus \bigcup_{b\in\mathscr{B}}b\Lambda,
\end{equation}
where $\mathscr{B}\subseteq \N\setminus\{1\}$ is an infinite pairwise coprime set with $\sum_{b\in\mathscr{B}}1/{b^d}<\infty$. They announced similar results as for $k$-free lattice points, leaving the details to the reader due to the similarity of methods.

\paragraph{$\mathfrak{B}$-free integers in number fields}
The setting we deal with in this paper also origins from~\cite{MR3485709}, where $\mathfrak{B}$-free integers in number fields are defined. Given a finite extension $K$ of $\Q$, with the ring of integers $\OK$, we set
\begin{equation}\tag{V}\label{nasz:setting}
\mathcal{F}_{\mathfrak{B}}=\mathcal{F}_{\mathfrak{B}}(\mathcal{O}_K):=\mathcal{O}_K\setminus \bigcup_{\mathfrak{b}\in\mathfrak{B}}\mathfrak{b},
\end{equation}
where $\mathfrak{B}$ is an infinite  pairwise coprime collection of ideals in $\mathcal{O}_K$ with 
$\sum_{\mathfrak{b}\in\mathfrak{B}}{1}/{|\mathcal{O}_K/\mathfrak{b}|}<\infty$. Under these assumptions we will say that $\mathfrak{B}$ is Erd\H{o}s.\footnote{This is consistent with the nomenclature from the one-dimensional case.}

\subsection{Results}
The shortest possible way to state the main results is the following:
\begin{center}
\eqref{s:A}-\eqref{s:E} are true in setting \eqref{nasz:setting}.
\end{center}
A more detailed formulation, together with the proofs, can be found in \cref{se:results}. Moreover, in \cref{a:a}, we show that~\eqref{nasz:setting} encompasses all the other cases listed above in \cref{moti}, i.e.~\eqref{eq:condB}-\eqref{czworka}. Theorem \ref{s:D} is proved in a more general setting in \cite[Theorem 1.2]{MR4251829}, where the proximality of $(X_\mathfrak{B},{(S_a)}_{a\in\OK})$ is characterized for general $\mathfrak{B}$.

\section{Basic objects, definitions, notation}
\paragraph{Number fields and ideals}\label{S:21}

Let $K$ be an algebraic number field of degree $d=[K:\Q]$ with integer ring $\OK$. It is well known
(see for example Theorem 51 in the Appendix~B of~\cite{MR3822326}) that for every K there is some $\alpha\in K$ such that $K=\mathbb{Q}[\alpha]$. As in every Dedekind domain, all proper non-zero ideals in $\OK$ factor (uniquely, up to the order) into a product of prime ideals.
We will denote ideals in $\mathcal{O}_K$ by $\mathfrak{a},\mathfrak{b},\dots$  We have
\[
\mathfrak{a}+\mathfrak{b}=\{a+b : a\in\mathfrak{a},b\in\mathfrak{b}\},\ \mathfrak{a}\mathfrak{b}=\{a_1b_1+\dots+a_kb_k : a_i\in \mathfrak{a},b_i\in\mathfrak{b},1\leq i\leq k\}.
\]
We say that an ideal $\mathfrak{b}$ divides an ideal $\mathfrak{a}$ if there exists an ideal $\mathfrak{c}$ with $\mathfrak{a}=\mathfrak{b}\mathfrak{c}$. Equivalently, $\mathfrak{a}\subseteq \mathfrak{b}$.
Proper ideals $\mathfrak{a},\mathfrak{b}$ are said to be \emph{coprime} whenever $\mathfrak{a}+\mathfrak{b}=\mathcal{O}_K$. Equivalently, $\mathfrak{a},\mathfrak{b}$ do not share factors: there are no non-trivial ideals $\mathfrak{a}',\mathfrak{b}',\mathfrak{c}$ such that $\mathfrak{a}=\mathfrak{c}\mathfrak{a}'$ and $\mathfrak{b}=\mathfrak{c}\mathfrak{b}'$. If $\mathfrak{a}$ and $\mathfrak{b}$ are coprime, we have $\mathfrak{a}\mathfrak{b}=\mathfrak{a}\cap\mathfrak{b}$. The \emph{algebraic norm} of an ideal $\mathfrak{a}\neq \{0\}$ is defined as $N(\mathfrak{a}):=|\mathcal{O}_K/\mathfrak{a}|=[\mathcal{O}_K:\mathfrak{a}]$. The \emph{Dedekind zeta function} is given by
\begin{equation}\label{eq:zetadede}
\zeta_K(s)=\sum_{\mathfrak{a}\neq \{0\}}\frac{1}{N(\mathfrak{a})^s}=\prod_{\mathfrak{p}\in\mathfrak{P}}\left(1-\frac{1}{N(\mathfrak{p})^s} \right)^{-1} \text{ for }s\text{ with }\Re(s)>1. 
\end{equation}
We also have the Prime Ideal Theorem over any number field, as proven by Landau in the second part of \cite{MR1511191}.
	
	\begin{Th}
		\label{thm: Prime Ideal Theorem}
		Let $K$ be a number field with ring of integers $\co_K$. Denoting by $\pi_K(X)$ the number of prime ideals $\gp$ of $\co_K$ such that $N(\gp) \leq X$, we have $$\lim_{X \rightarrow \infty} \frac{\pi_K(X)}{X/\log(X)} = 1.$$
	\end{Th}

	 For a number field $K$ of degree $d$, there are $d$ distinct embeddings of $K$ into $\mathbb{C}$. To be more presice, let $\alpha$ be such that $K = \q[\alpha]$ and let $f$ be the minimal polynomial of $\alpha$. Then for each of the $d$ distinct roots $\theta$ of the polynomial  $f$, there is a unique (injective) field homomorphism $\phi\colon K\to \mathbb{C}$ such that $\phi(\alpha)=\theta$. Moreover, all injective field homomorphisms are of this form. This allows us to define the Minkowski embedding $\sigma\colon \co_K \rightarrow \mathbb{C}^d$ in the following way: \[\sigma(x):= (\phi(x))_{\phi \in \text{Hom}_\q(K, \mathbb{C})},\]
    where $\text{Hom}_\q(K, \mathbb{C})$ denotes the set of all injective homomorphisms $\phi\colon K\to\mathbb{C}$ over $\q$.
    Clearly, the above object is uniquely defined only up to the permutation of the coordinates. Moreover, the image of $\OK$ via $\sigma$ in $\mathbb{C}^d$ is a lattice.  On $\OK$, we will use the norm inherited from the supremum norm of the Minkowski embedding, that is
     \[\|x\| := \|\sigma(x)\|_{\text{sup}}=\sup_{\phi \in \text{Hom}_\q(K,\mathbb{C})}|\phi(x)|.\]
    Given a lattice $\Lambda\subseteq \mathbb{R}^d$, one defines so-called \emph{successive minima} in the following way:
    \[
    \lambda_i(\Lambda):=\inf\{r\geq 0 : B(0,r)\cap \Lambda \text{ contains $i$ linearly independent vectors} \},
    \]
    where $B(0,r)=\{x\in\mathbb{R}^d : \|x\|\leq r\}$ and $\|\cdot\|$ denotes the supremum norm on $\mathbb{R}^d$. Now, using the Minkowski embedding, one can easily transfer this notion to lattices $\Gamma\subseteq\OK$ by setting
    \(
    \lambda_i(\Gamma):=\lambda_i(\sigma(\Gamma)).
    \)
    We will write $\lambda_1(\Gamma) \asymp_K \lambda_d(\Gamma)$ to mean that there are constants $c_K$ and $C_K$, depending only on $K$ such that $c_K\lambda_1(\Gamma) \leq \lambda_d(\Gamma) \leq C_K\lambda_1(\Gamma)$, independently of the ideal $\Gamma\subset \co_K$. By writing $\lambda_1(\gb)\asymp_K \lambda_d(\gb)$ we will mean that the above holds for every lattice being an ideal in $\co_K$.
    We have the following result.  
	\begin{Th}[{\cite[Corollary 4]{Fraczyk}}]
		\label{lm: lambda_i asymp n(gb)(1/n)}
		Let $K$ be a number field of degree $d$. Then for any ideal $\gb$ of $\co_K$, we have $$\lambda_1(\gb) \asymp_K \lambda_d(\gb) \asymp_K N(\gb)^{\nicefrac{1}{d}}.$$ 
	\end{Th}
	
	Notice that while \Cref{lm: lambda_i asymp n(gb)(1/n)} doesn't hold for lattices in general, Minkowski's second theorem (see Theorem 2E in \cite{MR1176315}) shows that \begin{equation}
		\label{eq: product of Lambdas}
		[\co_K:\Gamma] \asymp_K \lambda_1(\Gamma) \dots \lambda_d(\Gamma).
	\end{equation} 

We refer the reader to \cite{MR0195803,MR1697859} for more background information on algebraic number theory.

\paragraph{$\mathfrak{B}$-free integers in number fields}
Let $\mathfrak{B}:=\{\mathfrak{b}_\ell : \ell\geq 1\}$ be a collection of ideals in the integer ring $\mathcal{O}_K$ of an algebraic number field $K$. E.g.\ we can take $\mathfrak{B}=\{\mathfrak{p}^k: \mathfrak{p}\in\mathfrak{P}\}$, $k\geq 2$ (recall that $N(\mathfrak{a}\mathfrak{b})=N(\mathfrak{a})N(\mathfrak{b})$ for any ideals $\mathfrak{a},\mathfrak{b}$ and cf.\ \eqref{eq:zetadede}).
\begin{Def}
We say that 
\begin{enumerate}[(i)]
\item $\mathfrak{a}$ is \emph{$\mathfrak{B}$-free} whenever $\mathfrak{a}\not\subseteq \mathfrak{b}_\ell$ for all $\ell\geq 1$;
\item
$a\in\mathcal{O}_K$ is \emph{$\mathfrak{B}$-free} if the principal ideal $(a):=a\mathcal{O}_K$ is $\mathfrak{B}$-free.
\end{enumerate}
\end{Def}
We denote the set of $\mathfrak{B}$-free integers in $\mathcal{O}_K$ by $\mathcal{F}_\mathfrak{B}$. 
\begin{Remark}
Since for any ideal $\mathfrak{b}\subseteq \OK$ and $a\in \mathcal{O}_K$ we have $a\not \in \mathfrak{b}$ if and only if $ (a) \not\subseteq \mathfrak{b}$, it follows immediately that
\begin{equation}\label{Z:1}
\mathcal{F}_\mathfrak{B}=\mathcal{O}_K\setminus \bigcup_{\ell\geq 1}\mathfrak{b}_\ell.
\end{equation}
\end{Remark}
The characteristic function of $\mathcal{F}_\mathfrak{B}$ will be denoted by $\eta\in \{0,1\}^{\OK}$, i.e.\
\begin{equation}\label{Z:2}
\eta(a)=\begin{cases}
1,& \text{if }a \text{ is }\mathfrak{B}\text{-free},\\
0,& \text{otherwise}.
\end{cases}
\end{equation}

\paragraph{F\o{}lner sequences}
Let $\mathbb{G}$ be a countable group. 
\begin{Def}[\cite{MR0079220}]
We say that ${(F_n)}_{n\geq 1}\subseteq \mathbb{G}$ is a \emph{F\o{}lner sequence} in $\mathbb{G}$ if $\bigcup_{n\geq 1}F_n=\mathbb{G}$ and
$$
\lim_{n\to \infty}\frac{|gF_n\cap F_n|}{|F_n|}=1
$$
for each $g\in \mathbb{G}$. If $F_n\subseteq F_{n+1}$ for each $n\geq 1$, we say that $(F_n)_{n\geq 1}$ is \emph{nested}.
\end{Def}
\begin{Def}[\cite{Shu88}]
A sequence of finite sets ${(F_n)}_{n\geq 1}\subseteq\mathbb{G}$ is said to be \emph{tempered} if, for some $C>0$ and all $n\in\N$,
$$
\left|\bigcup_{k<n}F_k^{-1}F_n\right|\leq C |F_n|.
$$
\end{Def}
For $\mathbb{G}=\OK$, the usual F\o lner sequence is given by $B_n:=\sigma^{-1}(B(0,n))$, where $\sigma$ is the Minkowski embedding. This is a tempered F\o lner sequence.
\begin{Def}
Given a set $A \subset \mathbb{G}$ and a F\o lner sequence $(F_n)_{n\geq1}$, we define the \emph{upper} and \emph{lower densities of $A$ along} $(F_n)_{n\geq1}$:
\[
\overline{d}_{(F_n)}(A):=\limsup _{n \rightarrow \infty} \frac{\left|A \cap F_n\right|}{\left|F_n\right|} \text { and } \underline{d}_{(F_n)}(A):=\liminf _{n \rightarrow \infty} \frac{\left|A \cap F_n\right|}{\left|F_n\right|}.
\]
If these agree, we write the limit as $d_{(F_n)}(A)$, which we call the \emph{density of $A$ along $(F_n)$}.
When $\mathbb{G}=\mathcal{O}_K$ and $F_n=B_n$, we simply write $\overline{d}(A), \underline{d}(A), d(A)$ for each corresponding density.
\end{Def}
\begin{Def}\label{besicovitch}
We say that $\mathfrak{B}=\{\mathfrak{b}_{\ell}\}_{\ell\geq1}$ is \emph{Besicovitch} if $d(\mathcal{F}_\mathfrak{B})$ exists.
\end{Def}

\paragraph{Basic notions from dynamics}
Throughout this paper, we deal with groups \( \mathbb{G} \) isomorphic to \( \mathbb{Z}^d \) for some \( d \ge 1 \). Accordingly, we recall the dynamical notions only in this setting.

A \emph{topological dynamical system} is a pair $(X,(T_g)_{g\in \mathbb{G}})$, where $X$ is a compact metrizable space and $\mathbb{G}$ is acting on $X$ by homeomorphisms $T_g$, $g\in \mathbb{G}$. Given $y\in X$, we will denote by $X_y$ its orbit closure (the action will be always clear from the context), i.e.\ $X_y=\overline{\{T_gy : g\in\mathbb{G}\}}$. An action ${(T_g)}_{g\in \mathbb{G}}$ on $X$ is called \emph{transitive} if $X=X_y$ for some $y\in X$. A topological dynamical system $(X,{(T_g)}_{g\in \mathbb{G}})$ is called \emph{proximal} if, for all $x,y\in X$,
$$
\liminf_{g\to\infty}d(T_gx,T_gy)=0.
$$
If $(T_g)_{g\in \mathbb{G}}$ and $(S_g)_{g\in\mathbb{G}}$ act respectively on $X$ and $Y$, we say that $A\subseteq X\times Y$ is their \emph{topological joining} whenever $A$ is closed, invariant under ${(T_g\times S_g)}_{g\in\mathbb{G}}$ and has full projections on both coordinates. We say that the joining $A$ is non-trivial if $A\neq X\times Y$.

The primary example of a topological dynamical system is a \emph{subshift}, i.e.\ a closed subset $X\subseteq \mathcal{A}^\mathbb{G}$ (where $\mathcal{A}$ is a finite set called the alphabet) which is invariant under the action of $\mathbb{G}$ by commuting translations:
\begin{equation}\label{eq:translations}
S_g((x_h)_{h\in\mathbb{G}})=(x_{h+g})_{h\in\mathbb{G}},\ g\in\mathbb{G}.
\end{equation}
In this paper, we have $\mathcal{A}=\{0,1\}$.

 Let $M(X,(T_g)_{g\in \mathbb{G}})$ stand for the set of $(T_g)_{g\in G}$-invariant Borel probability measures on~$X$. Each choice of $\nu \in M(X,(T_g)_{g\in \mathbb{G}})$ gives rise to a \emph{measure--preserving dynamical system} $(X,\mathcal B,\nu,(T_g)_{g\in \mathbb{G}})$, where $\mathcal B$ denotes the Borel $\sigma$--algebra. A measure $\nu \in M(X,(T_g)_{g\in \mathbb{G}})$ is called \emph{ergodic} if we have $\nu(A\triangle T_g^{-1}A)=0$ for any $g\in\mathbb{G}$ only for $A\in\mathcal B$ such that $\nu(A)=0$ or $\nu(X\setminus A)=0$. A measure--preserving dynamical system $(X,\mathcal B,\nu,(T_g)_{g\in \mathbb{G}})$ is called \emph{ergodic} if $\nu$ is ergodic.

We say that $x\in X$ is \emph{generic} for $\nu$ (under ${(T_g)}_{g\in\mathbb{G}}$) along F\o{}lner sequence ${(F_n)}_{n\geq 1}\subseteq \mathbb{G}$ if
\begin{equation}\label{generuje2}
\frac{1}{|F_n|}\sum_{g\in F_n}f(T_gx) \to \int_X f\ d\nu
\end{equation}
for any $f\in C(X)$. 
\begin{Remark}\label{uw:geny}
In case of subshifts it suffices to check~\eqref{generuje2} for a certain ``easy'' family of functions $f$ to obtain that $x$ is a generic point (along a F\o lner sequence). Namely, for finite disjoint sets $A,B\subseteq \mathbb{G}$, let
\begin{equation}\label{generuje1}
C_{A,B}:=\{x\in \{0,1\}^\mathbb{G} : x(a)=1 \text{ for }a\in A \text{ and }x(b)=0\text{ for }b\in B\}
\end{equation}
be the corresponding \emph{cylinder set}. We write $C_A^1$ for $C_{A,\emptyset}$ and $C_B^0$ for $C_{\emptyset,B}$. Since locally constant functions span a dense subalgebra of $C(X)$, we obtain (using the inclusion-exclusion principle) that it suffices to check~\eqref{generuje2} for functions of the form $\raz_{C_B^0}$ for finite $B\subseteq \mathbb{G}$.
\end{Remark}
\begin{Th}[Pointwise Ergodic Theorem]\label{tw:point}
Let $\mathbb{G}$ be isomorphic to $\mathbb{Z}^d$.\footnote{In \cite{MR1865397}, \cref{tw:point} is proved in the more general case of discrete amenable groups, see also~\cite{MR2052281} and the earlier works \cite{MR1546100,MR0055415,MR0354926}.} Let $\nu\in M(X,{(T_g)}_{g\in\mathbb{G}})$ be ergodic and let $f\in L^1(X,\nu)$. Then, for $\nu$-a.e.\ $x\in X$,~\eqref{generuje2} holds for any tempered F\o{}lner sequence ${(F_n)}_{n\geq 1}$.
\end{Th}

\begin{Remark}\label{uw:uniqergo}
If $(X,{(T_g)}_{g\in\mathbb{G}})$ is uniquely ergodic (i.e.\ $|M(X,(T_g)_{g\in\mathbb{G}})|=1$) then~\eqref{generuje2} holds for every continuous function $f$, at every point $x$, along every F\o{}lner sequence ${(F_n)}_{n\geq 1}$. The proof goes along the same lines as in the classical case of $\Z$-actions, cf.\ \cite{MR648108}. (Since every F\o{}lner sequence has a tempered subsequence, as shown in~\cite{MR1865397}, we can drop the restriction that ${(F_n)}_{n\geq 1}$ is tempered, present in \cref{tw:point}.)
\end{Remark}



Given a topological dynamical system $(X,(T_g)_{g\in\mathbb{G}})$, we will denote by $h_{top}(X,(T_g)_{g\in\mathbb{G}})$ its \emph{topological entropy}, see~\cite{MR0417391,MR0453976} for the definition. In case of a subshift $X\subseteq \{0,1\}^\mathbb{G}$, we have the following:
\begin{equation}\label{uw:bloki}
h_{top}(X,(S_g)_{g\in\mathbb{G}})=\lim_{n\to\infty}\frac{1}{|F_n|}\log_2 \gamma(n),
\end{equation}
where $(F_n)$ is an arbitrary F\o lner sequence and 
\[
\gamma(n)=|\{A\in \{0,1\}^{F_n}: x_{g+h}=A_g \text{ for some }x\in X \text{ and }h\in \mathbb{G}, \text{ and all }g\in F_n \}|.\footnote{The proof goes by the same token as for $\Z$-actions, cf.\ Corollary 14.7 in~\cite{MR1958753}.}
\]
For $\nu\in M(X,{(T_g)}_{g\in\mathbb{G}})$, we denote by $h(X,{(T_g)}_{g\in\mathbb{G}}, \nu)$ the corresponding \emph{measure-theoretic entropy}, see~\cite{MR0335754,MR0316680,MR910005} for the definition. For any $\nu\in M(X,(T_g)_{g\in\mathbb{G}})$, we have
\begin{equation}\label{uw:wypu}
h(X,(T_g)_{g\in\mathbb{G}},\nu)=\int h(X,(T_g)_{g\in\mathbb{G}},\nu_y)\ dQ(y),
\end{equation}
where $\int \nu_y \ dQ(y)$ is the ergodic decomposition of $\nu$. Moreover, there is the following relation between measure-theoretic and topological entropy, known as the variational principle:\footnote{For the first time the variational principle was proved in~\cite{MR0417391} under some restrictions. See also~\cite{MR0437716} for the variational principle for topological pressure and~\cite{MR1110315} for the variational principle for entropy of $\R^d$-actions.}
\[
h_{top}(X,(T_g)_{g\in\mathbb{G}})=\sup_{\nu\in M(X,(T_g)_{g\in\mathbb{G}})}h(X,(T_g)_{g\in \mathbb{G}}, \nu).
\]
Every subshift over a finite alphabet has at least one measure of maximal entropy~\cite{MR0444904}.

\paragraph{Dynamical system outputting $\mathfrak{B}$-free integers}\label{szift}
Consider the product of finite groups $\OK/{\mathfrak{b}_\ell}$
\begin{equation}\label{eq:grupa}
G:=\prod_{\ell\geq 1}\mathcal{O}_K/{\mathfrak{b}_\ell}
\end{equation}
with coordinatewise addition. The Haar measure $\PP$ on $G$ is the product of the corresponding counting measures. Moreover, there is a natural $\mathcal{O}_K$-action on $G$ by translations: 
\begin{equation}\label{eq:rota}
T_a(g_1,g_2,\dots)=(g_1+a,g_2+a,\dots), a\in\OK.
\end{equation}
Since for each $L\ge 1$, the action of $(T_a)_{a\in\mathcal O_K}$ on the finite group $G_L:=\prod_{\ell=1}^L \mathcal O_K/b_\ell$ is transitive by the Chinese Remainder
Theorem for commutative rings (see e.g.\ Chapter I, \S 3 in \cite{MR1697859}), it is ergodic. This immediately implies that the system $(G,(T_a)_{a\in\mathcal O_K},\mathbb P)$ is ergodic. Moreover, since it is an ergodic rotation on a compact group, the action $(G,(T_a)_{a\in\mathcal O_K})$ is in fact uniquely ergodic.

Let $\varphi\colon G\to \{0,1\}^{\mathcal{O}_K}$ be defined as
\begin{equation}\label{eq:wzor}
\varphi(g)(a)=\begin{cases}
1,& \text{ if }g_{\ell}+a\not\equiv 0\bmod \mathfrak{b}_\ell \text{ for each }\ell\geq 1,\\
0,&\text{ otherwise},
\end{cases}
\end{equation}
where $g=(g_1,g_2,\dots)$. Notice that $\varphi(\underline{0})=\eta=\raz_{\mathcal{F}_\mathfrak{B}}$, where $\underline{0}=(0,0,\dots)$. 
\begin{Remark}
We have $\varphi={(\raz_C\circ T_a)}_{a\in\OK}$, where
\begin{equation}\label{eq:ce}
C=\{g\in G : g_\ell\not\equiv 0 \bmod \mathfrak{b}_\ell\text{ for each }\ell\geq 1\}.
\end{equation}
In other words, $\varphi$ is the coding of orbits of points under ${(T_a)}_{a\in\OK}$ with respect to the partition $\{C,G\setminus C\}$ of $G$.
\end{Remark}

Finally, let $\nu_\eta:=\varphi_\ast(\PP)$ be the pushforward of $\PP$ under $\varphi$.
We will call $\nu_\eta$ the \emph{Mirsky measure}. In the case of $\{p^k : p\in\mathcal{P}\}$-free numbers, in particular in the square-free case, this measure was considered by Mirsky~\cite{MR0021566,MR0028334} (cf.\ also~\cite{MR0030561}) who studied the frequencies of blocks, cf.\ \cref{thm:A}.

\paragraph{Admissible subshift}
Given a subset $A\subseteq \mathcal{O}_K$ and an ideal $\mathfrak{a}\subseteq \OK$, let
$$
D(\mathfrak{a}|A):=|A/\mathfrak{a}|=|\{b\bmod \mathfrak{a} : b\equiv a\bmod \mathfrak{a} \text{ for some }a\in A\}|.
$$
\begin{Def}[cf.\ \cite{MR3014544}]
We say that $A$ is {\em$\mathfrak{B}$-admissible} (or simply {\em admissible}) whenever
$$
D(\mathfrak{b}_\ell|A)<N(\mathfrak{b}_\ell)\text{ for each }\ell\geq 1.
$$
We say that $x\in \{0,1\}^{\mathcal{O}_K}$ is $\mathfrak{B}$-admissible if its support, denoted by $\text{supp }x$, is $\mathfrak{B}$-admissible; we will denote the set of all admissible sequences in $\{0,1\}^{\mathcal{O}_K}$  by $X_\mathfrak{B}$ (cf.\ Remark~\ref{uw:domk0}).
\end{Def}
\begin{Remark}\label{uw:domk0}
Notice that $X_\mathfrak{B}$ is a subshift. Indeed, it suffices to notice that if $x\in\{0,1\}^{\OK}$ is such that for each finite $B\subseteq \text{supp }x$,
$$
D(\mathfrak{b}_\ell|B)< N(\mathfrak{b}_\ell)\text{ for each }\ell\geq 1,
$$
then
$D(\mathfrak{b}_\ell|\text{supp }x)< N(\mathfrak{b}_\ell)\text{ for all }\ell\geq 1$.
\end{Remark}

\begin{Def}[cf.\ \cite{MR2317754,MR3007694}]
Let $Y\subseteq \{0,1\}^{\mathcal{O}_K}$ be a subshift. We say that $Y$ is \emph{hereditary} whenever $x,x'\in \{0,1\}^{\mathcal{O}_K}$ with $x\in Y$, $x'\leq x$ (coordinatewise) implies $x'\in Y$. 
\end{Def}
Clearly, $X_\mathfrak{B}$ is hereditary.

\section{Main results and their proofs}\label{se:results}
We are now ready to state our main results in their full form.
\begin{Thx}\label{thm:A}
For any Erd\H{o}s set $\mathfrak{B}$, we have the following:
\begin{enumerate}[(i)]
\item\label{thmAi} 
The Mirsky measure $\nu_\eta$ is invariant under ${(S_a)}_{a\in \mathcal{O}_K}$, and $\eta$ is generic for $\nu_\eta$ along a F\o{}lner sequence $(B_n)_{n\geq1}$.
\item\label{thmAii}
The dynamical systems $(X_\mathfrak{B},{(S_a)}_{a\in \mathcal{O}_K},\nu_\eta)$ and $(G,{(T_a)}_{a\in \mathcal{O}_K},\PP)$ are measure-theoretically isomorphic. In particular, $(X_\mathfrak{B},{(S_a)}_{a\in \mathcal{O}_K},\nu_\eta)$ is of zero Kolmogorov entropy.
\end{enumerate}
\end{Thx}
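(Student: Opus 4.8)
The plan is to establish both parts simultaneously by exploiting the map $\varphi\colon G\to\{0,1\}^{\OK}$ from \eqref{eq:wzor} together with the map $\theta$ alluded to in the Tools section, and by invoking unique ergodicity of $({(T_a)}_{a\in\OK},G,\PP)$ (\cref{rk:uniq-erg}). The key structural fact I would prove first is that $\varphi$ is \emph{equivariant}: $\varphi\circ T_a = S_a\circ\varphi$ for all $a\in\OK$, which is immediate from the definitions \eqref{eq:wzor} and \eqref{eq:rota}. Since $\varphi(\underline 0)=\eta$, equivariance gives $S_a\eta=\varphi(T_a\underline0)$, so the entire $\OK$-orbit of $\eta$ lies in $\varphi(G)$; as $\varphi$ is measurable but typically \emph{not} continuous (it depends on infinitely many coordinates), one must be slightly careful here, but $\varphi$ is continuous at $\underline0$ and more generally at every $g$ with $g_\ell\not\equiv 0\bmod\mathfrak b_\ell$ eventually — this is where condition (ii), $\sum 1/N(\mathfrak b_\ell)<\infty$, enters, via Borel–Cantelli, to show $\PP$-a.e.\ point is a continuity point of $\varphi$.

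For part \eqref{thmAi}: invariance of $\nu_\eta=\varphi_\ast\PP$ under ${(S_a)}$ follows formally from equivariance of $\varphi$ and $T_a$-invariance of $\PP$. To show $\eta$ is generic for $\nu_\eta$, by \cref{uw:geny} it suffices to verify the convergence \eqref{generuje2} for $f=\raz_{C_B^0}$ with $B\subseteq\OK$ finite. I would unwind $\raz_{C_B^0}(S_a\eta)$ into an indicator of a condition on $a$ modulo finitely many of the $\mathfrak b_\ell$ — concretely, $\eta(a+b)=0$ for some $b\in B$ means $a\equiv -b\bmod\mathfrak b_\ell$ for some $\ell$ — truncate the (infinite) union over $\ell$ at level $L$ using summability of $1/N(\mathfrak b_\ell)$ to control the tail uniformly, and then apply the Pointwise/uniquely-ergodic Ergodic Theorem (\cref{uw:uniqergo}) for the \emph{finite} group rotation on $G_L=\prod_{\ell\le L}\OK/\mathfrak b_\ell$ along the given F\o lner sequence in $\OK$. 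The limiting frequency is exactly $\PP(\varphi^{-1}(C_B^0))=\nu_\eta(C_B^0)$ by the Chinese Remainder Theorem and independence of coordinates; letting $L\to\infty$ closes the argument. This also shows $\nu_\eta$ is supported on $X_\mathfrak B$ (admissibility of $\text{supp}\,\eta$ and its shifts is built into the construction, using pairwise coprimality).

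For part \eqref{thmAii}: $\varphi$ induces a measure-theoretic factor map $({(T_a)},G,\PP)\to({(S_a)},X_\mathfrak B,\nu_\eta)$; I must promote it to an isomorphism by constructing a measurable inverse $\nu_\eta$-a.e. This is the role of $\theta\colon Y\to G$, defined on the full-measure set $Y$ of $x\in X_\mathfrak B$ with $D(\mathfrak b_\ell|\text{supp}\,x)=N(\mathfrak b_\ell)-1$ for every $\ell$ (that $\nu_\eta(Y)=1$ again follows from the finite-rotation ergodic theorem: the "missed" residue class mod $\mathfrak b_\ell$ is unique $\PP$-a.s.). One checks $\theta\circ\varphi=\mathrm{id}$ on a full-measure subset of $G$ and $\varphi\circ\theta=\mathrm{id}$ on $Y$, and that $\theta$ is equivariant, so $\varphi$ and $\theta$ are mutually inverse isomorphisms of the measure-preserving systems. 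Zero Kolmogorov entropy is then inherited from $({(T_a)},G,\PP)$, which is an ergodic (\cref{lm:ergodicity}) rotation on a compact Abelian group and hence has discrete spectrum, so zero entropy. The main obstacle I anticipate is the discontinuity of $\varphi$: making precise that the orbit closure $X_\eta$, the admissible subshift $X_\mathfrak B$, and the image $\overline{\varphi(G)}$ relate correctly, and that $\nu_\eta$ really is concentrated on the "good" set where $\varphi$ and $\theta$ invert each other — all of which hinges on the Borel–Cantelli consequence of $\sum_\ell 1/N(\mathfrak b_\ell)<\infty$ together with pairwise coprimality feeding the Chinese Remainder Theorem.
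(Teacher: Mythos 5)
Your treatment of part \eqref{thmAi} is essentially the paper's proof: equivariance $S_a\circ\varphi=\varphi\circ T_a$, reduction to the cylinders $C_B^0$ via \cref{uw:geny}, truncation at level $L$ using the clopen sets $C_L$, unique ergodicity of the finite rotation on $G_L$, and the tail bound $\sum_{\ell>L}1/N(\mathfrak{b}_\ell)<\vep$. One aside is wrong but harmless: it is not true that $\PP$-a.e.\ point is a continuity point of $\varphi$. Since the set $C$ of \eqref{eq:ce} is closed with empty interior while $C^c$ is open, the coordinate map $g\mapsto\raz_C(T_ag)$ is continuous at $g$ only when $T_ag\in C^c$; hence $\varphi$ is continuous at $g$ only if $\varphi(g)=\mathbf{0}$, a $\nu_\eta$-null event. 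This is precisely why the proof must run through the clopen truncations $C_L$ and a tail estimate rather than through continuity points; your part (i) does exactly that, so no harm is done.

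Part \eqref{thmAii} has a genuine gap. The whole argument rests on $\nu_\eta(Y)=1$, which you dispatch by asserting that ``the missed residue class mod $\mathfrak{b}_\ell$ is unique $\PP$-a.s.,\ by the finite-rotation ergodic theorem.'' That statement does not follow from the ergodic theorem for the finite rotations: to show $\operatorname{supp}\varphi(g)$ omits only the class $-g_\ell\bmod\mathfrak{b}_\ell$, you must show that for every $c\not\equiv-g_\ell$ the coset $c+\mathfrak{b}_\ell$ is not entirely covered by $\bigcup_{\ell'}(\mathfrak{b}_{\ell'}-g_{\ell'})$ --- a covering statement involving infinitely many congruence classes, with exactly the interchange-of-limits difficulty that makes sets of multiples fail to have densities in general. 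The paper proves $\nu_\eta(Y)=1$ (\cref{pr:gdzieeta}) only after a substantial detour: it first computes $h_{top}({(S_a)}_{a\in\OK},Y_{\geq\underline{s}})=\prod_{\ell}\left(1-s_\ell/N(\mathfrak{b}_\ell)\right)$ (\cref{pr:entropy}), deduces that every measure $\nu$ of maximal entropy for $X_\mathfrak{B}$ is concentrated on $Y$ (\cref{pr:maksym}), pushes such a $\nu$ forward by $\theta$ to get $\theta_\ast\nu=\PP$, and then transfers back using $y\leq\varphi(\theta(y))$. So \cref{thm:A}~\eqref{thmAii} in fact depends on the entropy machinery behind \cref{thm:B}, for which your proposal offers no substitute. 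A secondary error: $\varphi\circ\theta=\mathrm{id}$ on $Y$ is false, since $\theta$ is badly non-injective on $Y$ (by heredity, deleting from $\operatorname{supp}\eta$ a point whose residue classes remain represented does not change $\theta$); only the inequality $y\leq\varphi(\theta(y))$ holds, and the paper instead obtains a.e.\ invertibility of $\theta\circ\varphi$ from coalescence of the ergodic group rotation.
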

\begin{Thx}\label{thm:B}
For any Erd\H{o}s set $\mathfrak{B}=\{\mathfrak{b}_\ell : \ell\geq 1\}$, we have
\[h_{top}(X_\mathfrak{B},{(S_a)}_{a\in\OK})=\prod\limits_{\ell\geqslant1}\left(1-\frac{1}{N(\mathfrak{b}_\ell)}\right).\]
\end{Thx}
\begin{Thx}\label{thm:C}
For any Erd\H{o}s set $\mathfrak{B}$, we have $X_\eta=X_\mathfrak{B}$.
\end{Thx}
\setcounter{Thx}{4}
\begin{Thx}\footnote{We do not have Theorem D, to keep the names of our main results consistent with (A)-(E) used in the Introduction}\label{thm:E}
For any Erd\H{o}s set $\mathfrak{B}$, $(X_\mathfrak{B},{(S_a)}_{a\in\OK})$ has a non-trivial topological joining with $(G,{(T_a)}_{a\in\OK})$.
\end{Thx}

\begin{Remark}
Since $(G,{(T_a)}_{a\in\OK})$ is minimal and distal,\footnote{Recall that ${(T_a)}_{a\in \OK}$ is said to be \emph{distal} whenever $\inf_{a\in\OK} d(T_a x, T_a y)>0$ for all $x\neq y$.} it follows by \cref{thm:E} and by Theorem II.3 in~\cite{MR0213508} that $(X_\mathfrak{B},{(S_a)}_{a\in\OK})$ fails to be topologically weakly mixing: its Cartesian square is not transitive. On the other hand, the proximality of $(X_\mathfrak{B},{(S_a)}_{a\in\OK})$ implies that its maximal equicontinuous factor is trivial.
\end{Remark}

\subsection{Proof of \cref{thm:A} \eqref{thmAi}}

\begin{Prop}\label{lm31}
   \label{lm:Counting under restriction}
	Let $K$ be a number field of degree $d$ and $\gb\neq \{0\}$ an ideal of $\co_K$. For any $a \in \co_K$, 
    \[
    |B_n\cap (\mathfrak{b}+a)| = \frac{|B_n|}{N(\gb)} + \textrm{O}\left(1+\max_{1\leq j \leq d-1}\frac{n^j}{\lambda_1(\gb)\dots\lambda_j(\gb)}\right),
    \] 
    where the constant on the error term depends on $K$ only.
\end{Prop}
For the proof of the above lemma, we will need a result from~\cite{Widmer}. Before we formulate it, we need to introduce some notation. Let $\text{Lip}(d,c,M,L)$ be the family of all sets $S\subset \R^d$ such that there exist maps $\phi_1,\dots, \phi_M\colon [0,1]^{d-c}\to \R^d$ such that
\begin{itemize}
\item 
for each $1\leq j\leq M$, $\phi_j$ is Lipschitz with constant $L$ with respect to the corresponding Euclidean norms,
\item 
$S\subset \bigcup_{1\leq j\leq M}\phi_j([0,1]^{d-c})$.
\end{itemize}
Additionally, given a lattice $\Lambda\subset \R^d$, we denote by $\det(\Lambda)$ the Lebesgue measure of any fundamental domain of $\Lambda$.
\begin{Th}[Theorem 5.4 in~\cite{Widmer}]\label{th54}
If $\Lambda\subset \R^d$ is a lattice and $S\subset\R^d$ is bounded with $\partial S\in \text{Lip}(d,1,M,L)$ then
\[
\left|S\cap\Lambda\right| = \frac{\vol(S)}{\det(\Lambda)} + C\cdot M\left(1+\max_{1\leq j < d}\frac{L^j}{\lambda_1(\Lambda)\cdots\lambda_j(\Lambda)}\right),
\]
where $C$ is a constant depending only on $d$.
\end{Th}
\begin{proof}[Proof of Proposition~\ref{lm31}]
Consider $S = -t+ [-n,n]^d$ (where $t$ will be chosen later). Clearly, $\partial S \in \text{Lip}(d,1,2d,2n)$, since each of the $2d$ faces that contribute to the boundary of $\partial S $ can be parameterized by a map with Lipschitz constant $2n$ (for example, take the map $\phi\colon [0,1]^{d-1} \rightarrow [-n,n]^{d-1}\times\{n\}$ given by $\phi(x_1,\dots,x_{d-1}) = -t+ (2nx_1-n,\dots,2nx_{d-1}-n,n)$).
Moreover, $\vol(S) = (2n)^d$. It follows by Theorem~\ref{th54} that
\begin{equation}\label{wzorek}
|[-n,n]^d\cap(t+\Lambda)|=|S\cap \Lambda|=\frac{2^dn^d}{\det(\Lambda)} + \textrm{O}_d\left(1+\max_{1\leq j < d}\frac{n^j}{\lambda_1(\Lambda)\cdots\lambda_j(\Lambda)}\right)
\end{equation}
for any lattice $\Lambda\subset \R^d$.

Notice that
\[
|B_n\cap (\mathfrak{b}+a)|=|\sigma^{-1}([-n,n]^d)\cap (\sigma^{-1}(\sigma(\mathfrak{b}))+\sigma^{-1}(\sigma(a)))|=|[-n,n]^d\cap (\sigma(\mathfrak{b})+\sigma(a))|.
\]
Therefore, formula~\eqref{wzorek} for $t=\sigma(a)$ and $\Lambda=\sigma(\mathfrak{b})$ (recall that $\sigma$ stands for the Minkowski embedding) yields the following:
\begin{align}
\begin{split}\label{row1}
|B_n\cap (\mathfrak{b}+a)|&=\frac{2^dn^d}{\det(\sigma(\mathfrak{b}))}+\textrm{O}_d\left(1+\max_{1\leq j < d}\frac{n^j}{\lambda_1(\sigma(\mathfrak{b}))\cdot\ldots\cdot \lambda_j(\sigma(\mathfrak{b}))} \right)\\
&=\frac{2^dn^d}{\det(\sigma(\mathfrak{b}))}+\textrm{O}_d\left(1+\max_{1\leq j < d}\frac{n^j}{\lambda_1(\mathfrak{b})\cdot\ldots\cdot \lambda_j(\mathfrak{b})}\right)
\end{split}
\end{align}
In particular, for $a=0$ (hence $t=0$) and $\mathfrak{b}=\co_K$, we have
\begin{equation}\label{row2}
|B_n|=\frac{2^dn^d}{\det(\sigma(\co_K))}+\textrm{O}(n^{d-1}).
\end{equation}
By Proposition 5.2. in \cite{MR1697859} and the preceding discussion, for any non-zero ideal $\mathfrak{a}$ of $\co_K$, there is a constant $c_K$ only depending on $K$ such that $\det(\sigma(\mathfrak{a})) = c_KN(\mathfrak{a})$. In particular $\det(\sigma(\co_K)) = c_K$. Using~\eqref{row2} it follows that $$\frac{2^dn^d}{\det(\sigma(\mathfrak{b}))} = \frac{2^dn^d}{c_KN(\gb)} = (|B_n|+\textrm{O}(n^{d-1}))\cdot \frac{1}{N(\mathfrak{b})}.$$

Combining this with~\eqref{row1}, we conclude that
\begin{align*}
|B_n\cap (\mathfrak{b}+a)|&=(|B_n|+\textrm{O}(n^{d-1}))\cdot \frac{1}{N(\mathfrak{b})}+\textrm{O}_d\left(1+\max_{1\leq j < d}\frac{n^j}{\lambda_1(\mathfrak{b})\cdot\ldots\cdot \lambda_j(\mathfrak{b})}\right)\\
&=\frac{|B_n|}{N(\mathfrak{b})}+\textrm{O}_d\left(\frac{n^{d-1}}{N(\mathfrak{b})}\right)+\textrm{O}_d\left(1+\max_{1\leq j < d}\frac{n^j}{\lambda_1(\mathfrak{b})\cdot\ldots\cdot \lambda_j(\mathfrak{b})}\right)\\
&=\frac{|B_n|}{N(\mathfrak{b})}+\textrm{O}\left(1+\max_{1\leq j < d}\frac{n^j}{\lambda_1(\mathfrak{b})\cdot\ldots\cdot \lambda_j(\mathfrak{b})}\right)
\end{align*}
where the last equality follows by~\eqref{eq: product of Lambdas} after noticing that $\lambda_d(\mathfrak{b}) \geq 1$, since $\|x\|\geq 1$ for any $x\in \co_K$ (we point out that the constant in~\eqref{eq: product of Lambdas} depends on $K$, therefore the obtained bound also depends on $K$, not only on $d$). 
\end{proof}
\begin{Prop}\label{strong_light_tails}
    Suppose that $\mathfrak{B} = \{\gb_\ell : \ell\geq 1\}$ is Erd\H{o}s. Then $\lim_{L \rightarrow \infty} d\left(\bigcup_{\ell>L} \gb_\ell\right) = 0.$
\end{Prop}
\begin{proof}
 Let $x \in B_n \setminus \{0\}$. If $x \in \gb_\ell$, then we must have that $ \lambda_1(\gb_\ell) \leq \|x\| \leq n$. Therefore, we have that $$\left|B_n \cap \bigcup_{\ell>L}\gb_\ell\right| \leq  1 + \sum_{\substack{\ell: \lambda_1(\gb_\ell) \leq n  \\ \ell> L}} |\{x \in  B_n \setminus \{0\} : x \in \gb_\ell \}|  , $$ which, after applying  \Cref{lm:Counting under restriction} gives 
	\[\left|B_n \cap \bigcup_{\ell>L}\gb_\ell\right| \leq 1+  \sum_{\substack{\ell: \lambda_1(\gb_\ell) \leq n  \\ \ell> L}}\left( \frac{|B_n|}{N(\gb_\ell)} + O\left(1+\max_{1\leq j < d}\frac{n^{j}}{\lambda_1(\gb_\ell)\dots \lambda_j(\gb_\ell)}\right)  \right).\]
	
	We have to deal with three distinct sums separably, and show that once we divide by $|B_n|$, and take the limit of $n$ and then $L$ to infinity, these will go to $0$.  First, notice that  $$\lim_{L \rightarrow \infty} \lim_{n\rightarrow\infty} \frac{1}{|B_n|} \sum_{\substack{\ell: \lambda_1(\gb_\ell) \leq n  \\ \ell> L}} \frac{|B_n|}{N(\gb_\ell)} \leq \lim_{L \rightarrow \infty} \sum_{\ell>L} \frac{1}{N(\gb_\ell)} = 0, $$ as the series converges by hypothesis, so the first sum is dealt with.
	
	We next have to show that $$ \lim_{L \rightarrow \infty} \lim_{n\rightarrow\infty} \frac{1}{|B_n|} \sum_{\substack{\ell: \lambda_1(\gb_\ell) \leq n  \\ \ell> L}}1 = 0.$$ By  \Cref{lm: lambda_i asymp n(gb)(1/n)}, there is some $C$ dependent only on $K$ such that if $N(\gb_\ell)\leq C n^d$, then $\lambda_1(\gb_\ell) \leq n$. Therefore, the sum is bounded up to a constant multiple by $$\frac{1}{|B_n|}\sum_{\ell:N(\gb_\ell)\leq C n^d}1 .$$  Since all the $\gb_\ell$ are coprime, the number of ideals in $\mathfrak{B}$ with norm smaller than $Cn^d$ must be bounded by the number of prime ideals with norm smaller than $Cn^d$. By  \Cref{thm: Prime Ideal Theorem}, this number is bounded by $cn^d/\log(n)$ for some constant $c$ depending only on $K$. Consequently, it follows that  $$\lim_{L \rightarrow \infty} \lim_{n\rightarrow\infty}  \frac{1}{|B_n|}\sum_{\substack{\ell: \lambda_1(\gb_\ell) \leq n  \\ \ell> L}} 1 \ll \lim_{L \rightarrow \infty} \lim_{n\rightarrow\infty} \frac{n^d}{|B_n|\log(n)} = 0, $$ as we wanted to show.
	
	We are left with showing that for any $1 \leq j \leq d-1 $, we have $$\lim_{L \rightarrow \infty} \lim_{n\rightarrow\infty}  \frac{1}{|B_n|}\sum_{\substack{\ell: \lambda_1(\gb_\ell) \leq n  \\ \ell> L}} \frac{n^j}{\lambda_1(\gb_\ell)\dots \lambda_j(\gb_\ell)} = 0.$$
	Fix $j$. Using Equation (\ref{eq: product of Lambdas}) we have that $$\frac{n^j}{\lambda_1(\gb_\ell)\dots \lambda_j(\gb_\ell)} \asymp_K \frac{\lambda_{j+1}(\gb_\ell)\dots\lambda_d(\gb_\ell)n^{j}}{N(\gb_\ell)}. $$ By  \Cref{lm: lambda_i asymp n(gb)(1/n)}, we know that $\lambda_1(\gb_\ell) \asymp_K \lambda_d(\gb_\ell)  $, so there is some $C$ depending only on $K$, such that $\lambda_d(\gb_\ell) \leq C\lambda_1(\gb_\ell) $. Therefore, for any $i$ such that $\lambda_1(\gb_\ell) \leq n$, we have that $\lambda_d(\gb_\ell) \leq Cn$. Hence, $$\frac{1}{|B_n|}\sum_{\substack{\ell: \lambda_1(\gb_\ell) \leq n  \\ \ell> L}} \frac{\lambda_{j+1}(\gb_\ell)\dots\lambda_d(\gb_\ell)n^{j}}{N(\gb_\ell)} \leq \frac{1}{|B_n|}\sum_{\substack{\ell: \lambda_d(\gb_\ell) \leq Cn  \\ \ell> L}} \frac{\lambda_d(\gb_\ell)^{d-j}n^{j}}{N(\gb_\ell)} \leq  \frac{n^{n}}{|B_n|}\sum_{\substack{\ell: \lambda_d(\gb_\ell) \leq Cn  \\ \ell> L}} \frac{C^{d-j}}{N(\gb_\ell)}.   $$ The term $n^d/|B_n| $ is bounded by a constant only depending on $K$, so it follows that $$\lim_{L \rightarrow \infty} \lim_{n\rightarrow\infty}  \frac{1}{|B_n|}\sum_{\substack{\ell: \lambda_1(\gb_\ell) \leq n  \\ \ell> L}} \frac{\lambda_{j+1}(\gb_\ell)\dots\lambda_d(\gb_\ell)n^{j}}{N(\gb_\ell)} \ll \lim_{L \rightarrow \infty} \sum_{\substack{ \ell> L}} \frac{1}{N(\gb_\ell)}  = 0.$$

	Since all these limits go to $0$, we conclude that $$\lim_{L \rightarrow \infty} \lim_{n\rightarrow\infty}  \frac{1}{|B_n|}\left|B_n \cap \bigcup_{\ell>L}\gb_\ell\right| =0,$$ as we wanted to show. 
\end{proof}

Now, notice that
\begin{equation}\label{lm:varphi-eq}
S_a\circ \varphi=\varphi\circ T_a\text{ for each }a\in\mathcal{O}_K.
\end{equation}
Indeed, we have
\begin{align*}
\varphi\circ T_a(g)(b)=1 &\iff {(T_a(g))}_\ell+b\not\equiv 0\bmod \mathfrak{b}_\ell \text{ for each }\ell\geq 1\\
&\iff g_{\ell}+a+b\not\equiv 0\bmod \mathfrak{b}_\ell \text{ for each }\ell\geq 1\\
&\iff \varphi(g)(b+a)=1\\
&\iff S_a\circ \varphi(g)(b)=1.
\end{align*}
In particular, the Mirsky measure $\nu_\eta$ is invariant under ${(S_a)}_{a\in \mathcal{O}_K}$.

We will now prove that $\eta$ is generic for $\nu_\eta$ along $(B_n)_{n\geq1}$. The main idea here comes from the proof of Theorem 4.1.\ in \cite{MR3428961}.
In view of Remark~\ref{uw:geny}, we only need to show that
\begin{equation}\label{eq:claim}
\frac{1}{|B_n|}\sum_{a\in B_n}\raz_{C_B^0}(S_a \eta)=\frac{1}{|B_n|}\sum_{a\in B_n}\raz_{\varphi^{-1}(C_B^0)}(T_a \underline{0}) \to\nu_\eta(C_B^0) =\PP(\varphi^{-1}(C_B^0))
\end{equation}
for each finite set $B\subseteq \mathcal{O}_K$ (in the left equality we use the definition of $\eta$ and \eqref{lm:varphi-eq}). We have
\begin{equation}\label{eq:z}
\varphi^{-1}(C_B^0)=\bigcap_{b\in B}T_{-b} (\varphi^{-1}(C_0^0))=\bigcap_{b\in B}T_{-b} C^c,
\end{equation}
where $C$ is as in~\eqref{eq:ce}, i.e.\ $C=\varphi^{-1}(C_0^1)$.
Moreover, for each $L\geq 1$,
\begin{equation}\label{eq:a}
\bigcap_{b\in B} T_{-b}C_L^c\subseteq \bigcap_{b\in B}T_{-b} C^c \subseteq \bigcap_{b\in B} T_{-b}C_L^c \cup \bigcup_{b\in B}T_{-b}(C^c\setminus C_L^c),
\end{equation}
where $C_L:=\{g\in G : g_\ell\not\equiv 0\bmod \mathfrak{b}_\ell \text{ for each }1\leq \ell\leq L\}$. Since each $C_L$ is clopen, it follows that the function $\raz_{\bigcap_{b\in B}T_{-b}C_L^c}$ is continuous. Thus, since $(G,{(T_a)}_{a\in\OK})$ is uniquely ergodic, by Remark~\ref{uw:uniqergo}, we obtain
\begin{equation}\label{eq:b}
\frac{1}{|B_n|}\sum_{a\in B_n}\raz_{\bigcap_{b\in B}T_{-b}C_L^c}(T_a \underline{0}) \to \PP(\bigcap_{b\in B}T_{-b}C_L^c) \text{ as }n\to\infty.
\end{equation}
Moreover, given $\vep>0$, for $L$ sufficiently large, 
\begin{equation}\label{eq:c}
\PP(\bigcap_{b\in B}T_{-b}C_L^c)\geq \PP(\bigcap_{b\in B}T_{-b}C^c)-\vep
\end{equation}
and
\begin{align}
\begin{split}\label{eq:d}
\limsup_{n\to \infty}&\frac{1}{|B_n|}\sum_{a\in B_n}\raz_{\bigcup_{b\in B}T_{-b}(C^c\setminus C_L^c)}(T_a \underline{0})\leq |B|\limsup_{n\to\infty}\frac{1}{|B_n|}\sum_{a\in B_n}\raz_{(C^c\setminus C_L^c)}(T_a \underline{0})\\
&=|B|\limsup_{n\to\infty}\frac{|\left(\bigcup_{\ell\geq1}\mathfrak{b}_\ell\setminus\bigcup_{\ell\leq L}\mathfrak{b}_{\ell}\right)\cap B_n|}{|B_n|}\leq|B|\limsup_{n\to\infty}\frac{|\bigcup_{\ell>L}\mathfrak{b}_\ell\cap B_n|}{|B_n|}\\
&=|B|\cdot\overline{d}\left(\bigcup_{\ell>L}\mathfrak{b}_\ell\right).
\end{split}
\end{align}
By Proposition~\ref{strong_light_tails}, the right hand side of \eqref{eq:d} tends to $0$ as $L\to\infty$.
Using~\eqref{eq:z}, \eqref{eq:a}, \eqref{eq:b}, \eqref{eq:c} and~\eqref{eq:d}, we conclude that \eqref{eq:claim} indeed holds, and the proof of \cref{thm:A} \eqref{thmAi} is complete.

\subsection{Proof of \cref{thm:C}}
We begin this section by the following simple observation which yields one of the inclusions in the assertion of \cref{thm:C}:
\begin{Lemma}\label{eq:orbcls}
For any Erd\H{o}s set $\mathfrak{B}$, $\varphi(G)\subseteq X_\mathfrak{B}$. In particular, $X_\eta\subseteq X_\mathfrak{B}$ and $\nu_\eta(X_\mathfrak{B})=1$.
\end{Lemma}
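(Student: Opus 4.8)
The plan is to prove the two containments $\varphi(G)\subseteq X_\mathfrak{B}$ directly, and then read off the consequences. The key point is that $\mathfrak{B}$-admissibility is a property witnessed by finite subsets of the support (as noted in \cref{uw:domk0}), so it suffices to control each ideal $\mathfrak{b}_\ell$ separately.

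First I would fix $g\in G$ and show $\varphi(g)\in X_\mathfrak{B}$, i.e.\ that $A:=\operatorname{supp}\varphi(g)$ is $\mathfrak{B}$-admissible. Fix $\ell\geq 1$. By definition of $\varphi$ in \eqref{eq:wzor}, if $a\in A$ then $g_\ell+a\not\equiv 0\bmod\mathfrak{b}_\ell$, i.e.\ $a\not\equiv -g_\ell\bmod\mathfrak{b}_\ell$. Hence the residue class $-g_\ell\bmod\mathfrak{b}_\ell$ is not hit by any element of $A$, so $D(\mathfrak{b}_\ell\mid A)\leq N(\mathfrak{b}_\ell)-1<N(\mathfrak{b}_\ell)$. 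Since $\ell$ was arbitrary, $A$ is admissible, so $\varphi(g)\in X_\mathfrak{B}$; as $G$ was arbitrary this gives $\varphi(G)\subseteq X_\mathfrak{B}$.

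Next, to get $X_\eta\subseteq X_\mathfrak{B}$: recall $\eta=\varphi(\underline 0)\in\varphi(G)\subseteq X_\mathfrak{B}$, and $X_\mathfrak{B}$ is a subshift (\cref{uw:domk0}), hence closed and ${(S_a)}_{a\in\OK}$-invariant; since $X_\eta$ is by definition the smallest subshift containing $\eta$, we conclude $X_\eta\subseteq X_\mathfrak{B}$. (Alternatively one can invoke $S_a\circ\varphi=\varphi\circ T_a$ from \eqref{lm:varphi-eq} to see the orbit of $\eta$ lies in $\varphi(G)$, then take closures, using that $\varphi(G)$ need not be closed but $X_\mathfrak{B}$ is.) Finally, for $\nu_\eta(X_\mathfrak{B})=1$: since $\nu_\eta=\varphi_\ast(\PP)$ is supported on $\overline{\varphi(G)}$ and $X_\mathfrak{B}$ is closed and contains $\varphi(G)$, we get $\operatorname{supp}\nu_\eta\subseteq X_\mathfrak{B}$, so $\nu_\eta(X_\mathfrak{B})\geq\nu_\eta(\varphi(G))=\PP(G)=1$.

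There is no real obstacle here — the statement is deliberately the easy half of \cref{thm:C}, and the only thing to be careful about is the measurability/support bookkeeping in the last claim (one should note $\varphi$ is measurable, which is clear since each coordinate of $\varphi$ is an indicator of a clopen-determined-by-finitely-many-coordinates event in the relevant limit sense, or simply that $\varphi^{-1}$ of a cylinder is a Borel subset of $G$ as used already around \eqref{eq:z}). The genuinely hard direction, $X_\mathfrak{B}\subseteq X_\eta$, is presumably handled separately later and will require showing every admissible configuration is approximated by shifts of $\eta$ — but that is not needed for this lemma.
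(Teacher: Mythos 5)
Your proof is correct and follows essentially the same route as the paper: the paper's argument is precisely that for $a\in\operatorname{supp}\varphi(g)$ one has $a\not\equiv -g_\ell\bmod\mathfrak{b}_\ell$, so the class $-g_\ell$ is omitted from $\operatorname{supp}\varphi(g)/\mathfrak{b}_\ell$ for every $\ell$, giving admissibility. The ``in particular'' consequences are derived exactly as you do (the paper leaves them implicit), so there is nothing to add.
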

\begin{proof}
Let $a\in\text{supp }\varphi(g)$, i.e.\ $g_{\ell}+a\not\equiv 0\bmod \mathfrak{b}_\ell$  for each $\ell\geq 1$. In other words, $a\not\equiv -g_\ell\bmod \mathfrak{b}_\ell$, which yields $-g_\ell\bmod \mathfrak{b}_\ell\not\in\text{supp } \varphi(g)/\mathfrak{b}_\ell$ for each $\ell\geq 1$.
\end{proof}
The proof of the other inclusion $X_\mathfrak{B}\subseteq X_\eta$ is a bit more involved. It is an immediate consequence of \cref{thm:A}~\eqref{thmAi} and the following result:
\begin{Prop}[cf.\ Proposition 2.5.\ in \cite{MR3428961}]\label{pr:dodatnie}
Let $A,B\subseteq \mathcal{O}_K$ be finite and disjoint. For any Erd\H{o}s set $\mathfrak{B}$, the following are equivalent:
\begin{enumerate}[(i)]
	\item $A$ is $\mathfrak{B}$-admissible,\label{i}
	\item $\nu_\eta(C_A^1)>0$,\label{ii}
	\item $\nu_\eta(C_{A,B})>0$.\label{iii}
\end{enumerate}
\end{Prop}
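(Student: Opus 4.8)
The plan is to prove the chain of implications $(i)\Rightarrow(iii)\Rightarrow(ii)\Rightarrow(i)$, where $(iii)\Rightarrow(ii)$ is immediate since $C_A^1\supseteq C_{A,B}$ (just drop the zero-constraints on $B$), so that $\nu_\eta(C_{A,B})>0$ forces $\nu_\eta(C_A^1)>0$. It remains to establish $(i)\Rightarrow(iii)$ and $(ii)\Rightarrow(i)$.

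For $(ii)\Rightarrow(i)$ I would argue contrapositively: if $A$ is \emph{not} $\mathfrak{B}$-admissible, then $D(\mathfrak{b}_{\ell_0}|A)=N(\mathfrak{b}_{\ell_0})$ for some $\ell_0$, i.e.\ $A$ hits every residue class modulo $\mathfrak{b}_{\ell_0}$. But $\nu_\eta=\varphi_\ast(\PP)$, and by \cref{eq:orbcls} every $x\in\varphi(G)=\operatorname{supp}$ (a.e.) satisfies $D(\mathfrak{b}_\ell|\operatorname{supp}x)<N(\mathfrak{b}_\ell)$ for all $\ell$; hence no point in the support of $\nu_\eta$ can have $x(a)=1$ for all $a\in A$ when $A$ meets all classes mod $\mathfrak{b}_{\ell_0}$. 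Concretely, $\varphi(g)(a)=1$ for all $a\in A$ would require, for each $\ell$, that $-g_\ell\bmod\mathfrak{b}_\ell$ avoids $A\bmod\mathfrak{b}_\ell$; for $\ell=\ell_0$ this is impossible. Therefore $\varphi^{-1}(C_A^1)=\emptyset$ and $\nu_\eta(C_A^1)=\PP(\varphi^{-1}(C_A^1))=0$.

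For $(i)\Rightarrow(iii)$ — the main point — I would compute $\nu_\eta(C_{A,B})=\PP(\varphi^{-1}(C_{A,B}))$ directly. Using the product structure of $G$ and $\PP$, describe $\varphi^{-1}(C_{A,B})$ as the set of $g=(g_\ell)$ such that: for each $\ell$ and each $a\in A$, $g_\ell\not\equiv-a\bmod\mathfrak{b}_\ell$ (so $a\in\operatorname{supp}$); and for each $b\in B$ there exists $\ell$ with $g_\ell\equiv-b\bmod\mathfrak{b}_\ell$ (so $b\notin\operatorname{supp}$). Admissibility of $A$ says that for every $\ell$ the set $A\bmod\mathfrak{b}_\ell$ misses at least one class, so there is positive $\PP$-mass of $g$ with $g_\ell\notin-A\bmod\mathfrak{b}_\ell$ for all $\ell$; this handles the $A$-part and shows $\nu_\eta(C_A^1)>0$. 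To also kill the points of $B$, I would use \cref{lm:pure} (inclusion–exclusion over which constraints in $B$ are imposed) exactly as in the reduction in \cref{uw:geny}: express $\raz_{C_{A,B}}$ via $\raz_{C_{A\cup D}^1}$ for $A\subseteq A\cup D\subseteq A\cup B$, reducing positivity of $\nu_\eta(C_{A,B})$ to a statement about the measures $\nu_\eta(C_{A'}^1)$ for finite admissible $A'\supseteq A$. The cleanest route, following Proposition 2.5 in \cite{Ab-Le-Ru}, is to pass to finitely many coordinates: fix $L$ so large that $\sum_{\ell>L}1/N(\mathfrak{b}_\ell)$ is tiny, and on $G_L=\prod_{\ell\le L}\OK/\mathfrak{b}_\ell$ choose, by admissibility and the Chinese Remainder Theorem, residues $(g_\ell)_{\ell\le L}$ with $g_\ell\notin-A\bmod\mathfrak{b}_\ell$ for $\ell\le L$; this already forces $a\in\operatorname{supp}\varphi(g)$ for $a\in A$ whenever the tail coordinates are chosen so that the same holds for $\ell>L$ — which occurs with $\PP$-probability $\prod_{\ell>L}(1-D(\mathfrak{b}_\ell|A)/N(\mathfrak{b}_\ell))>0$ since the series $\sum 1/N(\mathfrak{b}_\ell)$ converges and $D(\mathfrak{b}_\ell|A)\le|A|$.

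The hard part is the $B$-side of $(i)\Rightarrow(iii)$: one must verify that requiring finitely many extra integers $b\in B$ to lie \emph{outside} the support does not destroy positivity. I would handle this by noting that for each $b\in B$, and for any $\ell$ with $b\not\equiv a\bmod\mathfrak{b}_\ell$ for all $a\in A$ (such $\ell$ exists because $A\cup\{b\}$ with $b\notin A$ is still admissible — or by choosing such an $\ell$ among the infinitely many coordinates using coprimality and that $A$ is finite), one is free to set $g_\ell\equiv-b\bmod\mathfrak{b}_\ell$ without violating the $A$-constraint at that coordinate; since $B$ is finite one can pick distinct such coordinates $\ell(b)$ for the different $b\in B$. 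Fixing those finitely many coordinates and letting the remaining (infinitely many) coordinates vary freely subject only to the $A$-constraint then gives a cylinder in $G$ of positive $\PP$-measure contained in $\varphi^{-1}(C_{A,B})$, which completes the proof. Combining with \cref{thm:A}\eqref{thmAi} (so that $\operatorname{supp}\nu_\eta$, and hence every cylinder of positive measure, meets $X_\eta$) gives $X_\mathfrak{B}\subseteq X_\eta$, and with \cref{eq:orbcls} the equality $X_\eta=X_\mathfrak{B}$ of \cref{thm:C}.
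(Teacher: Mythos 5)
Your proof is correct and follows essentially the same route as the paper: the product formula $\nu_\eta(C_A^1)=\prod_{\ell\geq 1}\bigl(1-D(\mathfrak{b}_\ell|A)/N(\mathfrak{b}_\ell)\bigr)$ together with $\sum_{\ell}1/N(\mathfrak{b}_\ell)<\infty$ handles the $A$-part, and the $B$-part is handled by fixing finitely many distinct coordinates $\ell(b)$ at which $b\not\equiv a\bmod \mathfrak{b}_{\ell(b)}$ for all $a\in A$ and setting $g_{\ell(b)}\equiv -b$ there --- exactly the paper's construction, with the existence of such coordinates supplied by \cref{lm:rozw} (your parenthetical ``$A\cup\{b\}$ is still admissible'' is neither true in general nor needed; the correct justification is the one you also give, that $b-a\neq 0$ lies in only finitely many of the pairwise coprime $\mathfrak{b}_\ell$). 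The only cosmetic difference is that you close the cycle via a direct proof that $\varphi^{-1}(C_A^1)=\emptyset$ for inadmissible $A$, whereas the paper deduces the corresponding implication from \cref{thm:A}~\eqref{thmAi}.
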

Before giving the proof, let us point out that we obtain  the following corollary as another immediate consequence of \cref{thm:A}~\eqref{thmAi}  and Proposition~\ref{pr:dodatnie}:
\begin{Cor}\label{co:rowne}
For any Erd\H{o}s set $\mathfrak{B}$, the topological support of $\nu_{\eta}$ is the subshift $X_\mathfrak{B}$ of $\mathfrak{B}$-admissible sequences.
\end{Cor}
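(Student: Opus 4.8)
The plan is to prove the two inclusions $\operatorname{supp}(\nu_\eta)\subseteq X_\mathfrak{B}$ and $X_\mathfrak{B}\subseteq\operatorname{supp}(\nu_\eta)$ separately, reading each off from what has already been assembled. For the first inclusion I would invoke \cref{eq:orbcls}, which gives $\nu_\eta(X_\mathfrak{B})=1$, together with the fact recorded in \cref{uw:domk0} that $X_\mathfrak{B}$ is a subshift, hence closed. Since the topological support of a Borel measure is by definition the smallest closed set of full measure, it is contained in every closed full-measure set; thus $\operatorname{supp}(\nu_\eta)\subseteq X_\mathfrak{B}$. This also shows that the support may be computed inside the subspace $X_\mathfrak{B}$, where the cylinders $C_{A,B}$ of \eqref{generuje1} form a basis for the (relative) topology.

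For the reverse inclusion I would argue pointwise. Fix $x\in X_\mathfrak{B}$; I must show that every neighbourhood of $x$ has positive $\nu_\eta$-measure, and it suffices to check this on a neighbourhood basis. In the product topology on $\{0,1\}^{\OK}$ such a basis at $x$ (intersected with $X_\mathfrak{B}$) is furnished by the cylinders $C_{A,B}$ with $A=\text{supp}\,x\cap F$ and $B=F\setminus\text{supp}\,x$, where $F$ ranges over the finite subsets of $\OK$; indeed $A,B$ are disjoint with $A\cup B=F$, and $C_{A,B}$ is precisely the set of points agreeing with $x$ on the window $F$. For each such window, $A$ is a finite subset of $\text{supp}\,x$, and since $x\in X_\mathfrak{B}$ the set $\text{supp}\,x$ is $\mathfrak{B}$-admissible.

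It remains to transfer admissibility from $\text{supp}\,x$ to the restriction $A$. This is immediate from the monotonicity of $D(\mathfrak{b}_\ell|\cdot)$ under inclusion: since $A\subseteq\text{supp}\,x$, we have $D(\mathfrak{b}_\ell|A)\leq D(\mathfrak{b}_\ell|\text{supp}\,x)<N(\mathfrak{b}_\ell)$ for every $\ell\geq 1$, so $A$ is $\mathfrak{B}$-admissible as well. With $A$ admissible, the implication \eqref{i}$\Rightarrow$\eqref{iii} of \cref{pr:dodatnie} gives $\nu_\eta(C_{A,B})>0$. Hence every basic neighbourhood of $x$, and therefore every neighbourhood of $x$, has positive measure, which places $x$ in $\operatorname{supp}(\nu_\eta)$. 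As $x\in X_\mathfrak{B}$ was arbitrary, $X_\mathfrak{B}\subseteq\operatorname{supp}(\nu_\eta)$, and combining the two inclusions yields $\operatorname{supp}(\nu_\eta)=X_\mathfrak{B}$.

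The deduction is essentially bookkeeping once \cref{pr:dodatnie} is in hand, so there is no serious obstacle; the only points requiring a moment's care are the passage to a neighbourhood basis—recognising that the cylinders cut out by a finite window $F$ exhaust a basis at $x$ and that their positivity is governed exactly by the admissibility of the restricted support $\text{supp}\,x\cap F$—and the elementary monotonicity of $D(\mathfrak{b}_\ell|\cdot)$, which is what links the global condition defining $X_\mathfrak{B}$ to the local, cylinder-by-cylinder positivity supplied by \cref{pr:dodatnie}. No further estimate is needed.
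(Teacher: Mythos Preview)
Your proof is correct and follows the same route the paper indicates: the corollary is stated as an immediate consequence of \cref{thm:A}\,\eqref{thmAi} and \cref{pr:dodatnie}, and you unpack exactly this---one inclusion from $\nu_\eta(X_\mathfrak{B})=1$ together with closedness of $X_\mathfrak{B}$, the other from the implication \eqref{i}$\Rightarrow$\eqref{iii} in \cref{pr:dodatnie} applied to the restricted support on a finite window. The only cosmetic difference is that you cite \cref{eq:orbcls} directly for $\nu_\eta(X_\mathfrak{B})=1$, whereas the paper points to \cref{thm:A}\,\eqref{thmAi}; either reference suffices.
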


For the proof of Proposition~\ref{pr:dodatnie}, we will need two lemmas.
\begin{Lemma}\label{lm:miary}
Suppose that $\mathfrak{B}$ is Erd\H{o}s. Then
for any finite set $A\subseteq \mathcal{O}_K$, we have $\nu_\eta(C_A^1)=\prod_{\ell\geq 1}\left(1-\frac{D(\mathfrak{b}_\ell|A)}{N(\mathfrak{b}_\ell)} \right)$.
\end{Lemma}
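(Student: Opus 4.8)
The plan is to compute $\nu_\eta(C_A^1)=\PP(\varphi^{-1}(C_A^1))$ directly, using the product structure of $G$ and $\PP$. First I would note that for $g=(g_1,g_2,\dots)\in G$ one has $\varphi(g)\in C_A^1$ iff $\varphi(g)(a)=1$ for every $a\in A$, i.e.\ $g_\ell+a\not\equiv 0\bmod\mathfrak{b}_\ell$ for each $\ell\geq 1$ and each $a\in A$. Equivalently, $g_\ell\notin -A\bmod\mathfrak{b}_\ell$ for each $\ell$, where $-A\bmod\mathfrak{b}_\ell$ denotes the image of $-A$ in $\OK/\mathfrak{b}_\ell$. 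Thus $\varphi^{-1}(C_A^1)=\prod_{\ell\geq 1}\bigl((\OK/\mathfrak{b}_\ell)\setminus(-A\bmod\mathfrak{b}_\ell)\bigr)$, a genuine product of subsets of the coordinate groups.

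Next I would invoke the fact (from \cref{szift}) that $\PP$ is the product of the normalized counting measures on the finite groups $\OK/\mathfrak{b}_\ell$. Hence $\PP$ of the above product set is the (infinite) product of the measures of the factors, namely $\prod_{\ell\geq 1}\frac{N(\mathfrak{b}_\ell)-|{-A}\bmod\mathfrak{b}_\ell|}{N(\mathfrak{b}_\ell)}$. It remains only to observe that $|{-A}\bmod\mathfrak{b}_\ell|=|A\bmod\mathfrak{b}_\ell|=D(\mathfrak{b}_\ell|A)$, since negation $x\mapsto -x$ is a bijection of $\OK/\mathfrak{b}_\ell$. This gives $\nu_\eta(C_A^1)=\prod_{\ell\geq 1}\bigl(1-\tfrac{D(\mathfrak{b}_\ell|A)}{N(\mathfrak{b}_\ell)}\bigr)$, as claimed.

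The only genuinely non-routine point is the justification that $\PP$ of an infinite product of measurable sets equals the product of the individual measures; but this is exactly the definition of the product measure on $G=\prod_\ell \OK/\mathfrak{b}_\ell$, applied to cylinder-type sets, and the infinite product $\prod_\ell(1-D(\mathfrak{b}_\ell|A)/N(\mathfrak{b}_\ell))$ converges (possibly to $0$) because $\sum_\ell D(\mathfrak{b}_\ell|A)/N(\mathfrak{b}_\ell)\leq |A|\sum_\ell 1/N(\mathfrak{b}_\ell)<\infty$ by condition (ii) on $\mathfrak{B}$, so there is no convergence subtlety to worry about. Everything else is the elementary bookkeeping described above, so I do not expect any real obstacle in this lemma; it is a direct unwinding of the definitions of $\varphi$, $\PP$, and $D(\mathfrak{b}_\ell|A)$.
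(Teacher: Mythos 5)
Your proof is correct and follows essentially the same route as the paper: both compute $\nu_\eta(C_A^1)=\PP(\varphi^{-1}(C_A^1))$ by unwinding the definition of $\varphi$ to see that the preimage is a product set over the coordinates of $G$, and then use the product structure of $\PP$ to get $\prod_{\ell\geq 1}\bigl(1-\tfrac{D(\mathfrak{b}_\ell|A)}{N(\mathfrak{b}_\ell)}\bigr)$. The only difference is that you spell out the countable-intersection/continuity-of-measure justification and the harmless identification $|{-A}\bmod\mathfrak{b}_\ell|=D(\mathfrak{b}_\ell|A)$, which the paper leaves implicit.
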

\begin{proof}
For each finite $A\subseteq \mathcal{O}_K$, we have
\begin{align*}
\nu_\eta(C_A^1)&=\varphi_\ast(\PP)(C_A^1)=\PP(\varphi^{-1}(C_A^1))\\
&=\PP\left(\bigcap_{\ell\geq 1}\{g\in G : g_\ell+a\not\equiv 0\bmod \mathfrak{b}_\ell\text{ for }a\in A\}\right)=\prod_{\ell\geq 1}\left(1-\frac{D(\mathfrak{b}_\ell|A)}{N(\mathfrak{b}_\ell)} \right).
\end{align*}
This finishes the proof.
\end{proof}
\begin{Remark}
It follows from Lemma~\ref{lm:miary} and Lemma 2.3.\ in~\cite{MR3428961} that
\[
\nu_\eta(C_{A,B})=\sum_{A\subseteq D\subseteq A\cup B}(-1)^{|D\setminus A|}\prod_{\ell\geq 1}\left(1-\frac{D(\mathfrak{b}_\ell|A)}{N(\mathfrak{b}_\ell)} \right)
\]
for each pair $A,B\subseteq \mathcal{O}_K$ of finite disjoint sets and any Erd\H{o}s set $\mathfrak{B}$.
\end{Remark}
\begin{Remark}\label{lm:rozw} 
Since any Dedekind domain is a Noetherian ring, by the Gilmer and Heinzer Theorem \cite{Gil-He}, there are only finitely many ideals of any fixed index in $\OK$. So any non-zero element of $\OK$ is contained in finitely many ideals. Hence the intersection of an infinite collection of pairwise coprime (proper) ideals is trivial.
\end{Remark}
\begin{proof}[Proof of~Proposition~\ref{pr:dodatnie}]
By \cref{thm:A}~\eqref{thmAi},~\eqref{iii} implies~\eqref{i}. Fix a finite admissible set $A\subseteq \mathcal{O}_K$. By~\cref{lm:miary}, we obtain
$$
\nu_{\eta}(C_A^1)=\prod_{\ell\geq 1}\left(1-\frac{D(\mathfrak{b}_\ell | A)}{N(\mathfrak{b}_\ell)} \right)>0
 \iff \sum_{\ell\geq 1}\frac{D(\mathfrak{b}_\ell|A)}{N(\mathfrak{b}_\ell)}<\infty
 \iff \sum_{\ell\geq 1}\frac{1}{N(\mathfrak{b}_\ell)}<\infty,
$$
whence~\eqref{i} implies~\eqref{ii}.

It remains to show that~\eqref{ii} implies~\eqref{iii}. Fix finite disjoint sets $A,B$. It follows by Remark~\ref{lm:rozw} that there exists $L\geq 1$ such that $a\equiv b\bmod \mathfrak{b}_\ell$ has no solution in $a\in A$, $b\in B$ for $\ell>L$. Let $B=\{b_1,\dots, b_r\}$ and consider
\begin{multline*}
\{g\in G : \forall 1\leq j\leq r,\ g_{L+j}+b_j\equiv 0\bmod \mathfrak{b}_{L+j}\}\\
\cap \{g\in G : \forall \ell\not\in \{L+1,\dots,L+r\}\ \forall a\in A,\ g_{\ell}+a\not\equiv 0 \bmod \mathfrak{b}_\ell \}\subseteq \varphi^{-1}(C_{A,B})
\end{multline*}
(the inclusion follows by the choice of $L$). The left-hand side of the above formula is an intersection of two independent events in $(G,\PP)$. The first of them has probability $\prod_{j=1}^r \frac{1}{N(\mathfrak{b}_{L+j})}>0$, and the second contains $\varphi^{-1}(C_A^1)$, therefore has also positive probability.
\end{proof}
\begin{Remark}\label{her}
An immediate consequence of \cref{thm:C} is that $X_\eta$ is hereditary for any Erd\H{o}s set $\mathfrak{B}$.
\end{Remark}

\subsection{Proof of \cref{thm:B} (and beyond)}
The main purpose of this section is to prove \cref{thm:B}. However, we will not only compute the topological entropy of $(X_\mathfrak{B},{(S_a)}_{a\in\OK})$, but also of its restriction to some natural invariant subsets of $X_\mathfrak{B}$. This will be crucial later, in the proof of \cref{thm:A}~\eqref{thmAii}.

For $s_\ell\geq 1$, $\ell\geq1$, let $\underline{s}:=(s_\ell)_{\ell\geq1}$. Consider
\begin{align}
\begin{split}\label{eq:pozdr}
Y_{\underline{s}}:=&\{x\in {X}_\mathfrak{B} : D(\mathfrak{b}_\ell|\text{supp }x)=N(\mathfrak{b}_\ell)-s_\ell \text{ for } \ell\geq 1 \},\\
Y_{\geq \underline{s}}:=&\{x\in {X}_\mathfrak{B} : D(\mathfrak{b}_\ell|\text{supp }x)\leqslant N(\mathfrak{b}_\ell)-s_\ell \text{ for }\ell\geq 1\}.
\end{split}
\end{align}
For $\underline{s}=(1,1,\dots)$ we will simply write $Y$ instead of $Y_{\underline{s}}$. Notice that we have
\begin{equation}\label{rozkla}
X_\mathfrak{B}=\bigcup_{s_\ell\geq 1, \ell\geq 1} Y_{(s_\ell)_{\ell\geq1}}.
\end{equation}
\begin{Remark}[cf.\ Remark~\ref{uw:domk0}] \label{uw:domk}
Notice that for any $\mathfrak{B}$, each $Y_{\geq \underline{s}}\subseteq X_\mathfrak{B}$ is closed and invariant under ${(S_a)}_{a\in\OK}$. Moreover, $\overline{Y}_{\underline{s}}\subseteq Y_{\geq \underline{s}}$.
\end{Remark}

Fix a F\o{}lner sequence ${(F_n)}_{n\geq 1}\subseteq \OK$. For each choice of $\underline{s}=(s_\ell)_{\ell\geq1}$, let
$$
\mathcal{F}_{n}^{\geq \underline{s}}:=\{W\subseteq F_n : D(\mathfrak{b}_\ell|W)\leqslant N(\mathfrak{b}_\ell)-s_\ell\text{ for }\ell\geq 1\}
$$
and let $\gamma^{\geq \underline{s}}(n)$ denote the cardinality of $\mathcal{F}_{n}^{\geq \underline{s}}$. In particular, $\gamma^{\geq \underline{1}}(n)$, where $\underline{1}=(1,1,\dots)$, denotes the number of $\mathfrak{B}$-admissible subsets of $F_n$. Moreover, given $L\geq 1$, let $\underline{s}_L:=(s_1,\ldots,s_L)$ and
$$
\mathcal{F}_{n,L}^{\geq \underline{s}_L}:=\{W\subseteq F_n : D(\mathfrak{b}_\ell|W)\leqslant N(\mathfrak{b}_\ell)-s_\ell\text{ for }1\leq \ell\leq L\}
$$
and let $\gamma_L^{\geq \underline{s}_L}(n)$ be the cardinality of $\mathcal{F}_{n,L}^{\geq \underline{s}_L}$. In particular, $\gamma_L^{\geq \underline{1}_L}(n)$, where $\underline{1}_L=(\underbrace{1,1\dotsc,1}_L)$, denotes the number of $\mathfrak{B}_L$-admissible subsets of $F_n$, where $\mathfrak{B}_L=\{\mathfrak{b}_\ell : 1\leq \ell\leq L\}$ and $\mathfrak{B}_L$-admissibility is defined in a similar way as $\mathfrak{B}$-admissibility. Clearly, 
$$
\gamma^{\geq \underline{s}}(n)\leqslant\gamma^{\geq \underline{s}_L}_L(n) \text{ for each }n\geq 1, L\geq 1.
$$
Moreover, given $n\geq 1$, $\gamma^{\geq \underline{s}_L}_L(n)$ decreases to $\gamma^{\geq \underline{s}}(n)$, and 
\begin{equation}\label{eq:finit}
\gamma^{\geq \underline{s}}(n)=\gamma^{\geq \underline{s}_{L(n)}}_{L(n)}(n)
\end{equation}
for some $L(n)\geq 1$.

Finally, for each choice of $\emptyset \neq A_\ell \subseteq \OK/{\mathfrak{b}_\ell}$, let
$$
Z_L=Z_L(A_1,\dots, A_L)=\{x\in\OK : x\bmod \mathfrak{b}_\ell \not\in A_\ell : 1\leq \ell\leq L\}.
$$
Notice that, for each $n\geq 1$, $F_n\setminus Z_L\in\mathcal{F}_{n,L}^{\geq \underline{s}_L}$ such that $s_\ell=|A_\ell|$, $1\leq \ell\leq L$. In particular, $F_n\setminus Z_L$ is $\mathfrak{B}_L$-admissible.

\begin{Lemma}\label{l:frek}
Fix $\mathfrak{B}$. For arbitrary $\vep>0$ and $n\in\N$ sufficiently large
\begin{equation}\label{frek}
\prod_{\ell=1}^{L}\left(1-\frac{s_\ell}{N(b_\ell)} \right)-\vep<\frac{|F_n\setminus Z_L|}{|F_n|}<\prod_{\ell=1}^{L}\left(1-\frac{s_\ell}{N(b_\ell)} \right)+\vep.
\end{equation}
\end{Lemma}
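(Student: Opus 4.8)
The plan is to deduce \eqref{frek} from the unique ergodicity of the finite-level rotation, mirroring the proof of \cref{thm:A}~\eqref{thmAi}. Write $G_L:=\prod_{\ell=1}^{L}\OK/\mathfrak{b}_\ell$ and let $\PP_L$ be the normalized counting measure on $G_L$. By the Chinese Remainder Theorem and the pairwise coprimality of $\mathfrak{B}$, the map $a\mapsto(a\bmod\mathfrak{b}_1,\dots,a\bmod\mathfrak{b}_L)$ is a surjection $\OK\to G_L$ with kernel $\mathfrak{c}_L:=\prod_{\ell=1}^{L}\mathfrak{b}_\ell$, so $G_L$ is finite with $|G_L|=\prod_{\ell=1}^{L}N(\mathfrak{b}_\ell)$. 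As noted in the remark following \cref{lm:ergodicity}, the translation action ${(T_a)}_{a\in\OK}$ on $G_L$ is ergodic; being an ergodic rotation on a compact group (cf.\ \cref{rk:uniq-erg}), it is uniquely ergodic, with $\PP_L$ its unique invariant measure.

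First I would record that the indicator of $F_n\setminus Z_L$ factors through this reduction map. By construction (cf.\ the discussion preceding the lemma), $a\in F_n\setminus Z_L$ exactly when $a\bmod\mathfrak{b}_\ell\notin A_\ell$ for every $1\le\ell\le L$, i.e.\ when $T_a\underline{0}\in D$ for $D:=\prod_{\ell=1}^{L}\bigl((\OK/\mathfrak{b}_\ell)\setminus A_\ell\bigr)\subseteq G_L$. Hence
$$
\frac{|F_n\setminus Z_L|}{|F_n|}=\frac{1}{|F_n|}\sum_{a\in F_n}\raz_{D}(T_a\underline{0}).
$$
Since $G_L$ is finite, $\raz_D$ is continuous, and since $\bigl({(T_a)}_{a\in\OK},G_L\bigr)$ is uniquely ergodic, \cref{uw:uniqergo} (transferred to $\OK$ via the Minkowski embedding) applies: the average above converges, along the fixed F\o{}lner sequence ${(F_n)}_{n\ge1}$, to
$$
\PP_L(D)=\prod_{\ell=1}^{L}\frac{N(\mathfrak{b}_\ell)-|A_\ell|}{N(\mathfrak{b}_\ell)}=\prod_{\ell=1}^{L}\Bigl(1-\frac{s_\ell}{N(\mathfrak{b}_\ell)}\Bigr),
$$
recalling $s_\ell=|A_\ell|$. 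Unwinding the definition of convergence then gives \eqref{frek} for every $\vep>0$ and all sufficiently large $n$.

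I do not anticipate a genuine difficulty; the one point needing attention is that \eqref{frek} is claimed for the single, arbitrary F\o{}lner sequence fixed before the lemma and with no temperedness hypothesis, so one must use the uniquely-ergodic form of the ergodic theorem (\cref{uw:uniqergo}), valid along every F\o{}lner sequence, rather than \cref{tw:point}. One may also bypass the dynamical language entirely: $F_n\setminus Z_L=\{a\in F_n : a\bmod\mathfrak{b}_\ell\notin A_\ell,\ 1\le\ell\le L\}$ is the trace on $F_n$ of a union of $\prod_{\ell}(N(\mathfrak{b}_\ell)-s_\ell)$ cosets of the finite-index subgroup $\mathfrak{c}_L\subseteq\OK$, and the F\o{}lner property forces $|F_n\cap(r+\mathfrak{c}_L)|/|F_n|\to 1/[\OK:\mathfrak{c}_L]$ for each such coset (translating $F_n$ by a difference of coset representatives changes it by $o(|F_n|)$, and the $[\OK:\mathfrak{c}_L]$ ratios sum to $1$); summing over the relevant cosets yields the same limit.
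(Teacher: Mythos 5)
Your argument is correct and coincides with the paper's own proof: both identify $\frac{|F_n\setminus Z_L|}{|F_n|}$ with the Birkhoff average of the indicator of the clopen set $D_L=\prod_{\ell=1}^{L}\bigl((\OK/\mathfrak{b}_\ell)\setminus A_\ell\bigr)$ evaluated along the orbit of $\underline{0}$ in the finite group $G_L$, and invoke unique ergodicity (\cref{uw:uniqergo}) to get convergence to $\PP(D_L)=\prod_{\ell=1}^{L}\bigl(1-\frac{s_\ell}{N(\mathfrak{b}_\ell)}\bigr)$ along the given F\o{}lner sequence. Your closing remark correctly pinpoints why the uniquely-ergodic version of the ergodic theorem, rather than \cref{tw:point}, is the right tool here.
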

\begin{proof}
Recall that $G_L=\prod\limits_{\ell=1}^L \mathcal{O}_K/{\mathfrak{b}_\ell}$ and
put 
$$
D_L:=\{g\in G_L  :  g_\ell\not\in A_\ell \text{ for }1\leqslant \ell\leqslant L\}.
$$
Since $\raz_{D_L}$ is continuous ($D_L$ is clopen), it follows by the unique ergodicity of the restriction of ${(T_a)}_{a\in\OK}$ to the first $L$ coordinates of $G$, i.e.\ to $G_L$, that
\[
\frac{1}{|F_n|} \sum_{a\in F_n} \raz_{D_L} (T_a (\underbrace{0,\dotsc,0}_L)) \rightarrow \PP(D_L)=\prod\limits_{\ell=1}^{L}\left(1-\frac{s_\ell}{N(\mathfrak{b}_\ell)} \right)
\]
(cf.\ Remark~\ref{uw:uniqergo}). Moreover,
$$
\raz_{D_L}(T_a(0,\dotsc,0))=1 \iff a\bmod \mathfrak{b}_\ell \not\in A_\ell \text{ for }1\leqslant l\leqslant L \iff a\not\in Z_L,
$$
whence
$$
\sum_{a\in F_n} \raz_{D_L} (T_a (0,\dotsc,0))=|F_n\setminus Z_L|,
$$
which completes the proof.
\end{proof}
\begin{Lemma}\label{l:ensz}
For arbitrary $\vep>0$ and $n\in\N$ sufficiently large
$$
2^{|F_n|\left(\prod_{\ell=1}^{L} \ \left(1-\frac{s_\ell}{N(\mathfrak{b}_\ell)}\right)- \varepsilon\right)}
\leqslant\gamma^{\geq \underline{s}_L}_L(n)
\leqslant \prod_{\ell=1}^{L}{N(\mathfrak{b}_\ell)\choose s_\ell}\cdot 2^{|F_n|\left(\prod_{\ell=1}^{L} \ \left(1-\frac{s_\ell}{N(\mathfrak{b}_\ell)}\right)+ \varepsilon\right)}.
$$
\end{Lemma}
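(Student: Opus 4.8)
The plan is to squeeze $\gamma_L^{\geq \underline{s}_L}(n)$ between powers of $2$ whose exponents are controlled by the cardinalities $|F_n\setminus Z_L(A_1,\dots,A_L)|$, and then to feed those cardinalities into \cref{l:frek}. The key structural fact is that, for a fixed $L$ and a fixed $\underline{s}_L=(s_1,\dots,s_L)$, there are only finitely many tuples $(A_1,\dots,A_L)$ with $\emptyset\neq A_\ell\subseteq\OK/\mathfrak{b}_\ell$ and $|A_\ell|=s_\ell$ — precisely $\prod_{\ell=1}^{L}\binom{N(\mathfrak{b}_\ell)}{s_\ell}$ of them — so the threshold ``$n$ sufficiently large'' appearing in \cref{l:frek} may be taken once and for all, valid simultaneously for every such tuple. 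Recall also, as used in the proof of \cref{l:frek}, that $F_n\setminus Z_L(A_1,\dots,A_L)$ is exactly the set of $a\in F_n$ with $a\bmod\mathfrak{b}_\ell\notin A_\ell$ for all $1\leq\ell\leq L$, and that in particular it lies in $\mathcal{F}^{\geq \underline{s}_L}_{n,L}$ whenever $|A_\ell|=s_\ell$.

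For the lower bound, fix any one admissible tuple $(A_1,\dots,A_L)$ (possible since $1\leq s_\ell\leq N(\mathfrak{b}_\ell)$). If $W\subseteq F_n\setminus Z_L(A_1,\dots,A_L)$, then the image of $W$ under reduction modulo $\mathfrak{b}_\ell$ is contained in $(\OK/\mathfrak{b}_\ell)\setminus A_\ell$, hence $D(\mathfrak{b}_\ell|W)\leq N(\mathfrak{b}_\ell)-s_\ell$ for every $\ell\leq L$, i.e.\ $W\in\mathcal{F}^{\geq \underline{s}_L}_{n,L}$. Therefore $\gamma_L^{\geq \underline{s}_L}(n)\geq 2^{|F_n\setminus Z_L|}$, and \cref{l:frek} bounds the exponent from below by $|F_n|\big(\prod_{\ell=1}^{L}(1-s_\ell/N(\mathfrak{b}_\ell))-\vep\big)$ for $n$ large, which is the left-hand inequality.

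For the upper bound, take an arbitrary $W\in\mathcal{F}^{\geq \underline{s}_L}_{n,L}$. For each $\ell\leq L$ the set of residues modulo $\mathfrak{b}_\ell$ \emph{not} represented in $W$ has cardinality $N(\mathfrak{b}_\ell)-D(\mathfrak{b}_\ell|W)\geq s_\ell$, so we may select a subset $A_\ell^W$ of it of size exactly $s_\ell$; then $W\subseteq F_n\setminus Z_L(A_1^W,\dots,A_L^W)$. The assignment $W\mapsto\big((A_1^W,\dots,A_L^W),\,W\big)$ is injective (the second coordinate determines $W$), and its range is contained in the disjoint union, over the $\prod_{\ell=1}^{L}\binom{N(\mathfrak{b}_\ell)}{s_\ell}$ admissible tuples $(A_1,\dots,A_L)$, of the power sets of the sets $F_n\setminus Z_L(A_1,\dots,A_L)$; each such power set has cardinality $2^{|F_n\setminus Z_L|}\leq 2^{|F_n|(\prod_{\ell=1}^{L}(1-s_\ell/N(\mathfrak{b}_\ell))+\vep)}$ by \cref{l:frek}, with the uniform threshold noted above. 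Summing over the tuples gives the right-hand inequality. There is no genuine obstacle here — the argument is purely combinatorial once \cref{l:frek} is available — and the only point that deserves a moment's care is precisely this uniformity of the ``$n$ large'' threshold across the finitely many tuples, together with the harmless observation that the choice $W\mapsto(A_\ell^W)_\ell$ need not be canonical, since only injectivity of $W\mapsto((A_\ell^W)_\ell,W)$ is used.
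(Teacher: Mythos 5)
Your proof is correct and follows essentially the same route as the paper: both arguments parametrize elements of $\mathcal{F}^{\geq \underline{s}_L}_{n,L}$ by a choice of forbidden residue classes $(A_1,\dots,A_L)$ together with a subset of $F_n\setminus Z_L(A_1,\dots,A_L)$, and then invoke \cref{l:frek} to control $|F_n\setminus Z_L|$. The only (cosmetic) difference is that you phrase the upper bound as an injection into a disjoint union where the paper phrases it as a surjective procedure, and you make explicit the harmless point that the threshold in \cref{l:frek} can be taken uniform over the finitely many tuples.
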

\begin{proof}
Fix $\vep>0$. Let $n\in\N$ be sufficiently large, so that~\eqref{frek} holds. The following procedure yields all elements of $\mathcal{F}_{n,L}^{\geq \underline{s}_L}$:
\begin{enumerate}[(a)]
\item choose $A_\ell \subseteq \OK/\mathfrak{b}_\ell$ with $|A_\ell|=s_\ell$, $1\leq \ell\leq L$,\label{step:A}
\item choose $W\subseteq F_n \setminus Z_L$, where $Z_L=Z_L(A_1,\dots, A_L)$\label{step:b}
\end{enumerate}
(some elements of $\mathcal{F}_{n,L}^{\geq \underline{s}_L}$ can be obtained in more than one way).
It follows from \cref{l:frek} that once we have fixed $A_1,\dots, A_L$ in step (\ref{step:A}), then the number of distinct elements of $\mathcal{F}_{n,L}^{\geq \underline{s}_L}$ obtained in step~(\ref{step:b}) can be estimated from below and from above by
$$
2^{|F_n|\left(\prod_{\ell=1}^{L} \ \left(1-\frac{s_\ell}{N(\mathfrak{b}_\ell)}\right)- \varepsilon\right)}\text{ and }2^{|F_n|\left(\prod_{\ell=1}^{L} \ \left(1-\frac{s_\ell}{N(\mathfrak{b}_\ell)}\right)+ \varepsilon\right)},
$$
respectively. Moreover, there are $\prod_{\ell=1}^{L}{N(\mathfrak{b}_\ell) \choose s_\ell}$ possible choices in step~(\ref{step:A}), which completes the proof.
\end{proof}

For the further discussion, we will use a particular F\o{}lner sequence. Let  
\begin{equation}\label{fo:17}
\text{$\iota \colon \Z^d \to \OK$ be a group isomorphism}
\end{equation}
 (recall that $\OK$ is isomorphic to a lattice in $\R^d$ via the Minkowski embedding, and any two lattices in $\R^d$ are isomorphic). Let ${(H_n)}_{n\geq 1}\subseteq \OK$ be the F\o{}lner sequence defined in the following way:
\begin{equation}\label{naszfolner}
H_n:=\{x\in\OK  :  \forall_{1\leqslant s\leqslant d} \ |\pi_s(\iota^{-1}(x))|\leqslant n\},
\end{equation}
where $\pi_s\colon \Z^d\to \Z$ is the projection onto the $s$-th coordinate.
\begin{Lemma}\label{l:pom}
For the F\o{}lner sequence ${(H_n)}_{n\geq 1}$ defined in~\eqref{naszfolner}, we have
\begin{equation}\label{eq:pom}
\gamma_L^{\geq \underline{s}_L}(nm)\leqslant\gamma_L^{\geq \underline{s}_L}(n)^{m^d}
\end{equation}
for any $n,m,L\geq 1$.
\end{Lemma}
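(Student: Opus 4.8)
**Proof plan for Lemma (the submultiplicativity estimate $\gamma_L^{\geq \underline{s}_L}(nm)\leq \gamma_L^{\geq \underline{s}_L}(n)^{m^d}$).**

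The plan is to exploit the product structure of the F\o lner sets $H_n$ coming from the isomorphism $\iota\colon\Z^d\to\OK$. First I would observe that $\iota^{-1}(H_{nm})=\{-nm,\dots,nm\}^d\subseteq\Z^d$ can be partitioned into $m^d$ translates of the cube $Q:=\iota^{-1}(H_n)=\{-n,\dots,n\}^d$ (up to the standard off-by-one bookkeeping at the boundary, which only makes the covering slightly generous and does not hurt an upper bound; alternatively one passes to half-open cubes $\{-n,\dots,n-1\}^d$ throughout, which tile exactly). Concretely, write $H_{nm}=\bigsqcup_{\mathbf{j}} (v_{\mathbf{j}}+H_n)$ where $v_{\mathbf j}=\iota(2n\mathbf j)$ and $\mathbf j$ ranges over an index set of size $m^d$. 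The key point is that each translate $v_{\mathbf j}+H_n$ is an honest shift of $H_n$ inside $\OK$.

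The second step is the restriction/shift invariance of the admissibility count. If $W\subseteq H_{nm}$ satisfies $D(\mathfrak b_\ell\mid W)\leq N(\mathfrak b_\ell)-s_\ell$ for $1\le\ell\le L$, then for each $\mathbf j$ the piece $W_{\mathbf j}:=W\cap(v_{\mathbf j}+H_n)$ also satisfies this bound, simply because $W_{\mathbf j}\subseteq W$ and $D(\mathfrak a\mid\cdot)$ is monotone under inclusion; moreover $-v_{\mathbf j}+W_{\mathbf j}\subseteq H_n$ and, since $D(\mathfrak b_\ell\mid\cdot)$ is invariant under translation by elements of $\OK$, we get $-v_{\mathbf j}+W_{\mathbf j}\in\mathcal F_{n,L}^{\geq\underline s_L}$. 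Thus the map $W\mapsto(-v_{\mathbf j}+W_{\mathbf j})_{\mathbf j}$ sends $\mathcal F_{nm,L}^{\geq\underline s_L}$ into the product $\prod_{\mathbf j}\mathcal F_{n,L}^{\geq\underline s_L}$, and since the pieces $W_{\mathbf j}$ determine $W$ (the union is disjoint and covers $H_{nm}$), this map is injective. Counting gives $\gamma_L^{\geq\underline s_L}(nm)\le\bigl(\gamma_L^{\geq\underline s_L}(n)\bigr)^{m^d}$, as claimed.

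The only real subtlety — and the step I would be most careful about — is the combinatorial indexing of $H_{nm}$ by translates of $H_n$: with the symmetric cubes $\{-n,\dots,n\}^d$ the decomposition is a cover rather than a partition, so one should either replace $H_n$ throughout the paper's entropy computation by the half-open version (which tiles exactly and is still a nested F\o lner sequence), or note that overcounting only strengthens the desired upper bound while injectivity still requires the pieces to reconstruct $W$; the cleanest route is the half-open convention. Everything else is formal: monotonicity of $D(\cdot\mid\cdot)$ under set inclusion, its $\OK$-translation invariance (both immediate from the definition $D(\mathfrak a\mid A)=|A/\mathfrak a|$), and the elementary bound $|\prod_{\mathbf j}\mathcal F_{n,L}^{\geq\underline s_L}|=\gamma_L^{\geq\underline s_L}(n)^{m^d}$ together with injectivity of the piece-decomposition map.
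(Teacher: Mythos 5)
Your proposal is correct and follows essentially the same route as the paper: decompose $H_{nm}$ into $m^d$ translates of $H_n$ via $\iota$, use translation invariance of $D(\mathfrak{b}_\ell\mid\cdot)$ and monotonicity under inclusion, and count via the injective map $W\mapsto (W_{\mathbf j})_{\mathbf j}$. The boundary subtlety you flag is real but harmless in exactly the way you describe (the paper's translates also overlap at boundary points, and injectivity of the piece-decomposition map is all that is needed for the upper bound).
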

\begin{proof}
We have $H_{nm}=\bigcap_{t=1}^d \bigcup_{j_t=1}^{m} H_{nm}^{(j_t)}=\bigcup_{j_1,\dotsc,j_d=1}^m \bigcap_{t=1}^d H_{nm}^{(j_t)}$,
where
$$
H_{nm}^{(j_t)}=\{x\in\OK : n(2j_t-m-1)-n\leqslant\pi_t(\iota^{-1}(x))\leqslant n(2j_t-m-1)+n\}.
$$
For $j_1,\dots,j_d\in \{1,\dots,m\}$, let $u_t:=2j_t-m-1$, $1\leq t\leq d$. Then, since $\iota$ is an isomorphism, we have
\begin{align*}
x\in\bigcap_{t=1}^d H_{nm}^{(j_t)} & \iff n u_t - n \leq \pi_t(\iota^{-1}(x))\leq nu_t+n\text{ for } 1\leq t\leq d\\
& \iff -n \leq \pi_t (\iota^{-1}(x-\iota(nu_1,\dots,nu_d))) \leq n\text{ for } 1\leq t\leq d\\
& \iff x-\iota(nu_1,\dots,nu_d)\in H_n.
\end{align*}
Thus,
$$
\bigcap_{t=1}^{d}H_{nm}^{(j_t)}=H_n+\iota(nu_1,\ldots,nu_d).
$$
Since the number of subsets $W\subseteq H_n+\iota(nu_1,\ldots,nu_d)$ satisfying $D(\mathfrak{b}_\ell|W)\leqslant N(\mathfrak{b}_\ell)-s_\ell, 1\leqslant\ell\leqslant L$ is equal to $\gamma_L^{\geq \underline{s}_L}(n)$, we conclude that~\eqref{eq:pom} indeed holds.
\end{proof}
\begin{Th}\label{pr:entropy}
For any Erd\H{o}s set $\mathfrak{B}$, we have \[h_{top}(Y_{\geq \underline{s}},{(S_a)}_{a\in\OK})=\prod\limits_{\ell\geq1}\left(1-\frac{s_\ell}{N(\mathfrak{b}_\ell)}\right).
\]
\end{Th}
\begin{proof}
We will use the F\o{}lner sequence ${(H_n)}_{n\geq 1}$ for calculation. We need to prove that 
\begin{equation}\label{eq:teza}
\lim_{n\to\infty} \ \frac{1}{|H_n|} \log_2 \gamma^{\geq \underline{s}}(n)=\prod\limits_{\ell\geqslant1}\left(1-\frac{s_\ell}{N(\mathfrak{b}_\ell)}\right)
\end{equation}
(cf.\ \cref{uw:bloki}).

Let $\vep>0$ and let $L$ be sufficiently large so that $\prod\limits_{\ell=1}^{L}\left(1-\frac{s_\ell}{N(\mathfrak{b}_\ell)}\right)<\prod\limits_{\ell\geqslant1}\left(1-\frac{s_\ell}{N(\mathfrak{b}_\ell)}\right)+\vep$. Then for each $n\in\N$ sufficiently large, by \cref{l:ensz}, we have
\begin{align*}
\frac{1}{|H_n|}&\log_2 \gamma^{\geq \underline{s}}(n) \leqslant\frac{1}{|H_n|} \log_2\gamma^{\geq \underline{s}_{L}}_L(n)\\
&\leqslant\prod_{\ell=1}^{L} \left(1-\frac{s_\ell}{N(\mathfrak{b}_\ell)}\right)+\vep + \frac{1}{|H_n|}\log_2 \prod_{\ell=1}^L {N(\mathfrak{b}_\ell) \choose s_\ell}\\
&\leqslant\prod_{\ell\geq1} \left(1-\frac{s_\ell}{N(\mathfrak{b}_\ell)}\right)+2\vep + \frac{1}{|H_n|}\log_2 \prod_{\ell=1}^L {N(\mathfrak{b}_\ell) \choose s_\ell}.
\end{align*}
Since $\vep>0$ can be arbitrarily small, we obtain
\begin{equation}\label{eq:teza1}
\limsup_{n\to\infty} \frac{1}{|H_n|} \log_2 \gamma^{\geq \underline{s}}(n)\leqslant\prod_{\ell\geqslant1} \left(1-\frac{s_\ell}{N(\mathfrak{b}_\ell)}\right).
\end{equation}

Fix $n\in\N$ and let $L(n)$ be as in~\eqref{eq:finit}. Then, by \cref{l:pom}, we have:
\begin{equation}
\begin{split}\label{eq:M1}
\frac{1}{|H_n|}\log_2 \gamma^{\geq\underline{s}}(n)&=\frac{1}{|H_n|}\log_2\gamma^{\geq \underline{s}_{L(n)}}_{L(n)}(n)\\
&\geqslant\frac{1}{|H_n|}\frac{1}{m^d}\log_2\gamma^{\geq \underline{s}_{L(n)}}_{L(n)}(nm).
\end{split}
\end{equation}
Moreover, it follows from \cref{l:ensz} that for all $m\geq M$ (where $M$ depends on $n$) we have
\begin{equation}\label{eq:M2}
\frac{1}{|H_{nm}|}\log_2\gamma^{\geq \underline{s}_{L(n)}}_{L(n)}(nm)\geqslant\prod_{l=1}^{L(n)} \ \left(1-\frac{s_\ell}{N(\mathfrak{b}_l)}\right)-\vep.
\end{equation}
Using~\eqref{eq:M1} and~\eqref{eq:M2}, we conclude that, for $m\geq M$,
\begin{equation*}
\begin{split}
\frac{1}{|H_n|}&\log_2 \gamma^{\geq \underline{s}}(n)\geq \frac{1}{|H_n|}\frac{|H_{nm}|}{m^d}\left(\prod_{\ell=1}^{L(n)} \ \left(1-\frac{s_\ell}{N(\mathfrak{b}_\ell)}\right)-\vep\right)\\
&=\frac{(2nm+1)^d}{m^d(2n+1)^d}\left(\prod_{\ell=1}^{L(n)} \ \left(1-\frac{s_\ell}{N(\mathfrak{b}_\ell)}\right)-\vep\right) \\
&\geqslant\frac{(2nm+1)^d}{m^d(2n+1)^d}\left(\prod_{\ell\geqslant1} \ \left(1-\frac{s_\ell}{N(\mathfrak{b}_\ell)}\right)-\vep\right).
\end{split}
\end{equation*}
Since $\vep>0$ can be arbitrarily small and $m$ arbitrarily large, we obtain
\begin{equation}\label{eq:teza2}
\liminf_{n\to\infty} \ \frac{1}{|H_n|} \log_2 \gamma^{\geq \underline{s}}(n)\geqslant\prod\limits_{\ell\geqslant1}\left(1-\frac{s_\ell}{N(\mathfrak{b}_\ell)}\right).
\end{equation}
It follows from~\eqref{eq:teza1} and~\eqref{eq:teza2} that~\eqref{eq:teza} indeed holds, and the proof is complete.
\end{proof}
\cref{thm:B} is clearly just a special case of \cref{pr:entropy}.

\subsection{Proof of \cref{thm:A}~\eqref{thmAii}}
The proof of \cref{thm:A}~\eqref{thmAii} consists of two main steps, which might themselves be of an interest:
\begin{Prop}\label{pr:maksym}
For any Erd\H{o}s set $\mathfrak{B}$ any measure of maximal entropy for $(X_\mathfrak{B},{(S_a)}_{a\in\OK})$ is concentrated on~$Y$.
\end{Prop}
\begin{Prop}\label{pr:gdzieeta}
For any Erd\H{o}s set $\mathfrak{B}$, we have $\nu_\eta(Y)=1$.
\end{Prop}

\begin{Remark}
An almost direct consequence of Proposition~\ref{pr:gdzieeta} is that $\eta\in Y$. Indeed, by~\eqref{rozkla}, $\eta\in Y_{\underline{s}}$ for some $\underline{s}=(s_\ell)_{\ell\geq1}$ such that $s_\ell\geq 1$, $\ell\geq1$. Moreover, by \cref{thm:A}~\eqref{thmAi} and Remark~\ref{uw:domk}, we obtain
$$
1=\nu_\eta(\overline{Y}_{\underline{s}})\leq \nu_\eta(Y_{\geq \underline{s}}).
$$
This contradicts Proposition~\ref{pr:gdzieeta}, since $Y_{\geq \underline{s}}\cap Y=\emptyset$.
\end{Remark}

One of the crucial tools will be the function $\theta\colon Y\to G$ given, for $y\in Y$, by
\begin{equation}\label{eq:deftheta}
\theta(y)=g \iff \text{supp }y \cap (\mathfrak{b}_\ell-g_\ell)=\emptyset \text{ for each }\ell\geq 1,
\end{equation}
where $g=(g_1,g_2,\dotsc)$. Notice that
\begin{equation}\label{lm:theta-equiv}
T_a\circ \theta=\theta\circ S_a\text{ for } a\in\OK.
\end{equation}
Indeed, take $a\in\mathcal{O}_K$, $y\in Y$ and let $g:=\theta(y)$. Then 
$$
T_a(\theta(y))=T_a(g)=(g_1+a,g_2+a,\dots).
$$
By the definition of $S_a$ we have $\text{supp } S_a y=\text{supp }y - a$. Hence, by the definition of~$\theta$, $(\text{supp }y-a) \cap (\mathfrak{b}_\ell-(g_\ell+a))=\emptyset \text{ for each }\ell\geq 1$.
This yields~\eqref{lm:theta-equiv}.

Before giving the proofs of Proposition~\ref{pr:maksym} and Proposition~\ref{pr:gdzieeta}, we show how to derive \cref{thm:A}~\eqref{thmAii} from them.
\begin{proof}[Proof of \cref{thm:A}~\eqref{thmAii}]
In view of Proposition~\ref{pr:gdzieeta}, we can consider $\varphi$ as a map whose codomain is $Y$, i.e.\ $\varphi\colon G\to Y$. Moreover, $\theta\colon Y\to\theta(Y)\subseteq G$. By \eqref{lm:varphi-eq} and \eqref{lm:theta-equiv}, we have 
$$
(\theta\circ\varphi)\circ T_a = T_a \circ (\theta\circ\varphi) \text{ for each }a\in \mathcal{O}_K.
$$
It follows by coalescence of ${(T_a)}_{a\in G}$ that $\theta\circ\varphi$ is a.e.\ invertible.\footnote{An automorphism $T$ of $\xbm$ is called {\em coalescent}~\cite{MR0230877} if each endomorphism commuting with $T$ is invertible. All ergodic automorphisms with purely discrete spectrum are coalescent. Both the definition and this fact extend to countable group actions.} In particular, $\varphi$ is 1-1 a.e., i.e.\ $\varphi$ yields the required isomorphism.
\end{proof}

\begin{proof}[Proof of Proposition~\ref{pr:maksym}]
Let $\nu$ be a measure of maximal entropy for $(X_\mathfrak{B}, {(S_a)}_{a\in\OK})$. 
By \cref{pr:entropy}, we have
\begin{equation}\label{eq:1}
h(X_\mathfrak{B},{(S_a)}_{a\in\OK},\nu)=h_{top}(X_\mathfrak{B},{(S_a)}_{a\in\OK})=\prod_{\ell\geq1}\left(1-\frac{1}{N(\mathfrak{b}_\ell)}\right).
\end{equation}

Suppose additionally that $\nu$ is ergodic. We claim that
\begin{equation}\label{eq:jedy}
\nu(Y_{\underline{s}})=1\text{ for some }\underline{s}=(s_\ell)_{\ell\geq1}.
\end{equation}
Indeed, let, for $\ell\geq 1$, $c_\ell \colon X_\mathfrak{B} \to \N$ be the measurable function given by
$$
c_\ell(x)=N(\mathfrak{b}_\ell)-D(\mathfrak{b}_\ell|\text{supp }x).
$$
Then, for any $\ell\geq 1$, we have
$
X_\mathfrak{B}=\bigsqcup_{k=1}^{N(\mathfrak{b}_\ell)}Y_k(\mathfrak{b}_\ell),
$
where $Y_k(\mathfrak{b}_\ell)=\{x\in X_\mathfrak{B} : c_\ell(x)=k\}$. Since $Y_k(\mathfrak{b}_\ell)$ are invariant and pairwise disjoint for a given $\ell\geq 1$, it follows by the ergodicity of $\nu$ that there exists a unique $1\leq s_\ell \leq N(\mathfrak{b}_\ell)$ such that $\nu(Y_{s_\ell}(\mathfrak{b}_\ell))=1$. This yields~\eqref{eq:jedy}. Since $Y_{\underline{s}}\subseteq Y_{\geq \underline{s}}$, it follows immediately that
$$
\nu(Y_{\geq \underline{s}})=1
$$
for the same choice of $\underline{s}$ as in~\eqref{eq:jedy}. By the variational principle and \cref{pr:entropy},
\begin{equation}\label{eq:4}
h(X_\mathfrak{B},{(S_a)}_{a\in\OK},\nu)
\leq h_{top}(Y_{\geq \underline{s}},{(S_a)}_{a\in\OK})=\prod_{\ell \geq 1}\left(1-\frac{s_\ell}{N(\mathfrak{b}_\ell)}\right).
\end{equation}
Comparing \eqref{eq:1} and \eqref{eq:4}, we conclude that
$$
\prod_{\ell\geq1}\left(1-\frac{1}{N(\mathfrak{b}_\ell)}\right) \leq \prod_{\ell\geq1}\left(1-\frac{s_\ell}{N(\mathfrak{b}_\ell)}\right).
$$
This is however true only if $s_\ell=1$ for all $\ell\geq 1$, whence indeed $\nu(Y)=1$.

If $\nu$ is not ergodic, we write its ergodic decomposition. It follows by \eqref{uw:wypu} that almost every measure in this decomposition is also of maximal entropy, whence it is concentrated on $Y$. Thus also $\nu(Y)=1$.
\end{proof}

\begin{proof}[Proof of Proposition~\ref{pr:gdzieeta}]
We will show that
\begin{equation}\label{nm:1}
\nu_\eta(\varphi(\theta(Y)))=1
\end{equation}
and
\begin{equation}\label{nm:2}
\varphi(\theta(Y))\subseteq Y,
\end{equation}
and the assertion will follow immediately. Let $\nu$ be an invariant measure concentrated on $Y$ (in view of Proposition~\ref{pr:maksym}, we can take for $\nu$ any measure of maximal entropy).

For~\eqref{nm:1}, notice first that~\eqref{lm:theta-equiv} and the unique ergodicity of the rotation on $\mathbb{G}$ yield $\theta_\ast(\nu)=\PP$. Therefore and by Proposition~\ref{pr:maksym},
\begin{equation*}
\nu_\eta(\varphi(\theta(Y)))=\PP(\varphi^{-1}(\varphi(\theta(Y))))\geq \PP(\theta(Y))
=\theta_\ast\nu(\theta(Y))=\nu(\theta^{-1}(\theta(Y)))\geq \nu(Y)=1,
\end{equation*}
i.e.\ \eqref{nm:1} indeed holds. We will now show~\eqref{nm:2}, by proving
\begin{equation}\label{lm:varphi-max}
y\leq \varphi(\theta(y)) \text{ for each }y\in Y.
\end{equation}
Take $y\in Y$ and suppose that $\varphi(\theta(y))(a)=0$. By the definition of $\varphi$, this means that for some $\ell\geq 1$ we have 
$$
\theta(y)_\ell+a \equiv 0 \bmod \mathfrak{b}_\ell.
$$
In other words,  $\theta(y)_\ell+a\in\mathfrak{b}_\ell$, i.e.\ $a\in \mathfrak{b}_\ell-\theta(y)_\ell$. It follows from~\eqref{eq:deftheta} that $y(a)=0$. This yields~\eqref{lm:varphi-max} and the proof is complete.
\end{proof}

\subsection{Proof of \cref{thm:E}}
For $x\in X_\mathfrak{B}$ and $\ell\geqslant 1$ let
$$
F_\ell(x):=\{c\bmod \mathfrak{b}_\ell\colon x|_{-c+\mathfrak{b}_\ell}\equiv 0\}.
$$
Then $F:=(F_1,F_2,\dots)$ defines a multivalued function $F\colon X_{\mathfrak{B}}\to G$. Let
$$
A:=cl(\text{Graph}(F)).
$$
We claim that
\begin{enumerate}[(i)]
\item\label{i'}
$(S_a\times T_a)(A)=A$ for each $a\in\mathcal{O}_K$,
\item\label{ii'}
$\pi_{X_{\mathfrak{B}}}(A)=X_{\mathfrak{B}}\text{ and }\pi_{G}(A)=G$, where $\pi_{X_\mathfrak{B}}$ and $\pi_G$ stand for the corresponding projections,
\item\label{iii'}
$A\neq X_{\mathfrak{B}}\times G$.
\end{enumerate}
In order to prove~\eqref{i'}, it suffices to show $F\circ S_a= T_a\circ F$. Indeed, for $(x,\omega)\in X_{\eta}\times G$, we have
\begin{align*}
\omega\in F(S_a x)&\iff S_a x|_{-\omega_\ell+\mathfrak{b}_\ell}\equiv 0  \text{ for all }\ell\geq 1\\
&\iff x|_{a-\omega_\ell+\mathfrak{b}_\ell}\equiv 0 \text{ for all }\ell\geq 1\\\
&\iff T_{-a}\omega\in F(x) \iff \omega \in T_a(F(x)).
\end{align*}
Clearly, $\pi_{X_{\mathfrak{B}}}(A)=X_{\mathfrak{B}}$. Moreover, we have $F(\mathbf{0})=G$. This yields~\eqref{ii'}. For the last part of our claim consider $x\in X_\mathfrak{B}$ such that $x(0)=1$ and $x(a)=0$ for $a\neq 0$.
Notice that for all $\ell\geqslant 1$ we have $0\not \in F_\ell(x)$, whence
\[
F(x)\subseteq \prod_{\ell\geqslant 1}\left((\OK/\mathfrak{b}_\ell)\setminus \{0\}\right).
\]
Moreover, if $y\in X_{\mathfrak{B}}$ is such that $d(x,y)$ is small enough then $y(0)=x(0)=1$, which yields
\[
F(y)\subseteq \prod_{\ell\geqslant 1}\left((\OK/\mathfrak{b}_\ell)\setminus \{0\}\right).
\]
It follows that $(x,\omega)\not \in A$, whenever $\omega_\ell=0$ for some $\ell\geq 1$. This completes the proof of \cref{thm:E}.

\section{From $\mathfrak{B}$-free integers to $\mathscr{B}$-free lattice points}\label{a:a}
Clearly, \eqref{dwojka} is a special case of \eqref{czworka}. Moreover, \eqref{trojka} is a special case of~\eqref{nasz:setting} since
\[
\sum_{\mathfrak{p}\in\mathfrak{P}}\frac{1}{N(\mathfrak{p}^k)}\leq \sum_{\mathfrak{a} \neq \{0\}}\frac{1}{N(\mathfrak{a})^k}=\zeta_K(k)<\infty \text{ for }k\geq 2
\]
and in a Dedekind domain any two prime ideals $\mathfrak{p}\neq \mathfrak{q}$ are coprime. Our goal is to show now that Sarnak's program \eqref{s:A}-\eqref{s:E} in case \eqref{czworka} can be easily obtained using the results in setting~\eqref{nasz:setting} presented in \cref{se:results}. 
Let $K$ be a number field of degree $d$. Fix a lattice $\Lambda$ in $\R^d$. Let 
\[
j\colon \Lambda \to \OK\text{ be a group isomorphism}
\]
(cf.~\eqref{fo:17}). Consider two actions by translations: ${(S_a)}_{a\in\OK}$ on $\{0,1\}^{\OK}$ and $({S_\mathbf{n})}_{\mathbf{n}\in\Lambda}$ on $\{0,1\}^\Lambda$ (see~\eqref{eq:translations} for the formulas).
\begin{Remark}\label{m:1}
Notice that ${(S_a)}_{a\in\OK}$ on $\{0,1\}^{\OK}$ and $({S_\mathbf{n})}_{\mathbf{n}\in\Lambda}$ on $\{0,1\}^\Lambda$ are two different representations of the same (topological) dynamical system. Indeed, let $S_J\colon \{0,1\}^{\Lambda}\to\{0,1\}^{\OK}$ be given by
\[
S_J(x)(a):=x(j^{-1}(a))\text{ for }a\in\OK.
\]
Then, for each $\mathbf{n}\in\Lambda$, we have 
\(
S_{j(\mathbf{n})}=S_J\circ S_{\mathbf{n}}\circ S_J^{-1}.
\)
\end{Remark}
Fix an infinite pairwise coprime set $\mathscr{B}:=\{b_\ell :\ell\geq 1\}\subseteq\N$ satisfying $\sum_{\ell\geq 1}\frac{1}{b_\ell^d}<\infty$. Then each $L_\ell:=b_\ell \Lambda$ is a sublattice of $\Lambda$ and each $\mathfrak{b}_\ell:=j(L_\ell)$ is an ideal in $\OK$. Since $j$ is a group isomorphism,
$\mathfrak{B}$ is Erd\H{o}s and
the set of $\mathscr{B}$-free lattice points in $\Lambda$ defined as
$
\mathcal{F}_\mathscr{B}=\mathcal{F}_\mathscr{B}(\Lambda):=\Lambda\setminus \bigcup_{\ell\geq 1}b_\ell\Lambda
$
satisfies
\begin{equation}\label{m:2}
j(\mathcal{F}_\mathscr{B})=\mathcal{F}_\mathfrak{B},
\end{equation}
where $\mathcal{F}_\mathfrak{B}$ is the corresponding set of $\mathfrak{B}$-free integers (defined as in~\eqref{Z:1}).
Moreover, any residue class modulo $j(L_\ell)$ corresponds to a unique residue class modulo $L_\ell$. Hence Theorem \ref{thm:C} implies part \eqref{s:C} of Sarnak's program in setting \eqref{czworka}.

Let $H:=\prod_{\ell\geq 1}\Lambda / b_\ell\Lambda$ and let $\widetilde{\PP}$ stand for Haar measure on $H$ (cf.\ \eqref{eq:grupa}). Notice that this group is isomorphic to $G$ via the map $J\colon H\to G$ given by
\[
J(h)=(j(h_1),j(h_2),\dots)\text{ for }h=(h_1,h_2,\dots).
\]
On $H$ we have a natural $\Lambda$-action ${(T_\mathbf{n})}_{\mathbf{n}\in\Lambda}$:
\[
T_\mathbf{n}(h)=(h_1+\mathbf{n},h_2+\mathbf{n},\dotsc)\text{ for }h=(h_1,h_2,\dots)\in H
\]
(cf.\ \eqref{eq:rota}).\footnote{Notice that both $J$ and ${(T_\mathbf{n})}_{\mathbf{n}\in\Lambda}$ are well-defined.}
\begin{Remark}\label{m:3}
Notice that ${(T_a)}_{a\in \OK}\colon G\to G$ and ${(T_\mathbf{n})}_{\mathbf{n}\in\Lambda}\colon H\to H$ are two different representations of the same (algebraic and topological) dynamical system. Indeed, we have 
\begin{equation}
\text{$T_{j(\mathbf{n})}=J\circ T_{\mathbf{n}}\circ J^{-1}$ for each $\mathbf{n}\in\Lambda$.}
\end{equation}
\end{Remark}
Define $\widetilde{\varphi}\colon H\to \{0,1\}^\Lambda$ in a similar way as $\varphi$ in~\eqref{eq:wzor}:
\[
\widetilde{\varphi}(h)(\mathbf{n})=1 \iff h_\ell + \mathbf{n} \not\in L_\ell\text{ for each }\ell\geq 1.
\]
\begin{Remark}\label{m:4}
Notice that $\widetilde{\varphi}$ is the function which ``corresponds'' to $\varphi$ when we take into account isomorphisms from Remark~\ref{m:1} and Remark~\ref{m:3}. Indeed, we have $\widetilde{\varphi}=S_J^{-1}\circ \varphi\circ J$. It follows that
\[
\widetilde{\nu}_\eta:={\widetilde{\varphi}}_\ast(\widetilde{\PP})=(S_J^{-1}\circ \varphi \circ J)_\ast(\widetilde{\PP})=(S_J^{-1})_\ast(\nu_\eta).
\]
\end{Remark}
Since the topological conjugacy preserves genericity, the value of topological entropy, the number of minimal sets, proximality, triviality of the maximal equicontinuous factor and non-trivial topological joinings,   
by Theorems \ref{thm:A}, \ref{thm:B}, \ref{thm:E}, the formula \eqref{m:2} and Remarks~\ref{m:1}, \ref{m:3} and \ref{m:4}, we obtain that
parts \eqref{s:A}, \eqref{s:B}, \eqref{s:E} of Sarnak's program in setting \eqref{czworka} are covered. 
\section*{Acknowledgments}
We would like to thank M.~Lema\'{n}czyk and I.~Vinogradov for the helpful comments on the preliminary version of this paper. We also thank T.~Downarowicz and B.~Kami\'{n}ski for their help with references to the entropy of $\Z^d$-actions.

FA was supported by the Deutsche Forschungsgemeinschaft (DFG, German Research Foundation) - Project-ID 491392403 - TRR 358 (project A2).

\footnotesize

\bigskip
\footnotesize
\noindent
Francisco Ara\'{u}jo\\
\textsc{Institute of Mathematics, Paderborn University, Warburger Str. 100, 33098 Paderborn, Germany}\par\nopagebreak
\noindent
\textit{E-mail address:} \texttt{faraujo@math.uni-paderborn.de}

\medskip

\noindent
Aurelia Dymek\\
\textsc{Faculty of Mathematics and Computer Science, Nicolaus Copernicus University, Chopina 12/18, 87-100 Toru\'{n}, Poland}\par\nopagebreak
\noindent
\textit{E-mail address:} \texttt{aurbart@mat.umk.pl}

\medskip

\noindent
Joanna Ku\l aga-Przymus\\
\textsc{Faculty of Mathematics and Computer Science, Nicolaus Copernicus University, Chopina 12/18, 87-100 Toru\'{n}, Poland}\par\nopagebreak
\noindent
\textit{E-mail address:} \texttt{joanna.kulaga@gmail.com}

\end{document}